\theoremstyle{definition}
\newcommand{\parag}[1]{\vspace{2mm}
\noindent{\bf #1} }
\newcommand{\RR}{\mathbb R}
\newcommand{\ZZ}{\mathbb Z}
\newcommand{\pts}{\mathcal{P}}
\newcommand{\curves}{\Gamma}
\newcommand{\vecx}{\mathrm{x}}
\newcommand{\Pf}{\mathrm{Pfaff}}
\newcommand*{\vecw}{\mathrm{w}}
\newcommand*{\vecv}{\mathrm{v}}
\title{Distinct Distances on Pfaffian Curves}
\author{Abhiram Natarajan\thanks{Warwick Mathematics Institute, University of Warwick, UK.
{\sl abhiram.natarajan@warwick.ac.uk}.}
\and 
Adam Sheffer\thanks{CUNY: Baruch College, NY, USA.
{\sl adamsh@gmail.com}.}
}
\date{}
\begin{document}
\maketitle

\begin{abstract}
We generalize Pach and de Zeeuw's bound for distinct distances between points on two curves, from algebraic curves to Pfaffian curves.
Pfaffian curves include those that can be defined by any combination of elementary functions, including exponential and logarithmic functions, rational and irrational powers, trigonometric functions and their inverses, integration, and more.
The bound remains $\Omega(\min\{m^{3/4}n^{3/4},m^2,n^2\})$, as obtained from the proximity technique of Solymosi and Zahl.
\end{abstract}

\section{Introduction}

In recent decades, there has been an increasing interest in generalizing central Discrete Geometry theorems. 
Results concerning incidences, distinct distances, and related topics often begin with lines in $\RR^2$ and are then extended to algebraic curves, algebraic varieties in $\RR^d$, semi-algebraic sets, other fields, and more. For a few examples among many, see \cite{AC24,BKT04,RSdZ16,SV08}. One particular effort has been to extend discrete geometry over $\RR$ to include sets more general than semi-algebraic sets, yet retaining certain convenient topological properties. These properties --- for example, having finitely many connected components and admitting stratifications --- are usually referred to as \emph{tame topological}. There is a growing body of work in discrete geometry which studies questions in this broader tame setting \cite{BR18,CGS20,CPS24,CS18,CST21}.

In this paper we focus on Pfaffian functions, which were introduced by Khovanski{\u\i} \cite{Khovanskii91}. Sets defined by Pfaffian functions, called \emph{Pfaffian sets}, are significantly more general than semi-algebraic sets, allowing the use of exponential and logarithmic functions, rational and irrational powers, trigonometric functions and their inverses, integration, and more.\footnote{The use of sines and cosines is allowed only on a bounded domain.} At the same time, Pfaffian sets retain key properties of semi-algebraic sets, such as well-behaved intersections and a bounded number of connected and irreducible components.

An ambitious long-term goal is to extend the known bounds for expanding polynomials \cite{RSdZ16,RSS16} and the Zarankiewicz problem \cite{FPSSZ17,TidorYu24} from the semi-algebraic case to the Pfaffian case. 
The current work provides a modest first step in this direction, proving a distinct distances bound between two Pfaffian curves.
This is a special case of the expanding polynomials bounds of \cite{RSS16,SolyZahl24}.
Two other works which explore questions in extremal combinatorics involving Pfaffian sets are \cite{Balsera23,LNV24}.

For those curious about the model-theoretic origins of tameness, the framework of \emph{o-minimal geometry} provides a natural setting for going beyond semi-algebraic sets while preserving their tame topological properties. The axioms of o-minimality guarantee that any collection of subsets of $\RR^d$ satisfying them --- such a collection is called an \emph{o-minimal structure} --- exhibits tame behavior. Pfaffian sets (zero loci of Pfaffian functions) and semi-Pfaffian sets (defined by inequalities of Pfaffian functions) belong to the Pfaffian structure, which is the smallest collection containing all semi-Pfaffian sets. Also, the Pfaffian structure is closed under the usual structure operations (finite unions, complements, Cartesian products, and projections. Speissegger \cite{Sp99} established that the Pfaffian structure is an o-minimal structure. By providing a rich class of sets with tame topological behavior, the Pfaffian structure plays a central role in o-minimal geometry. Readers interested in further details regarding o-minimality can consult \cite{Dries98}.

\parag{Distinct distances between curves.}
For points $p,q\in\RR^2$, we denote the Euclidean distance between $p$ and $q$ as $|pq|$.
For a set $\pts\subset\RR^2$, the number of \emph{distinct distances} spanned by pairs from $\pts$ is
\[ D(\pts)= \left|\left\{|pq|\ :\ p,q\in \pts \right\}\right|. \] 
Erd\H os \cite{Erdos46} discovered a set $\pts$ of $n$ points with\footnote{For a discussion of asymptotic notation such as $\Theta(\cdot), O(\cdot), \Omega(\cdot)$ and $\Omega_k(\cdot)$, see \cite[Appendix A]{Sheffer22}.} $D(\pts)=\Theta(n/\sqrt{\log n})$. 
He conjectured that every set $\pts$ of $n$ points satisfies $D(\pts)=\Omega(n/\sqrt{\log n})$.
After over 70 years, Guth and Katz \cite{GK15} almost completetly resolved Erd\H os's conjecture, by proving that $D(\pts)=\Omega(n/\log n)$.
The countless works on the subject led to a deep theory, with many other distinct distances problems that are still wide open.

For point sets $\pts_1,\pts_2\subset \RR^2$, we define
\[ D(\pts_1,\pts_2)= \left|\left\{|pq|\ :\ p\in \pts_1 \text{ and } q\in \pts_2 \right\}\right|. \] 
In other words, we have a bipartite distinct distances problem, considering only distances of pairs from $\pts_1\times\pts_2$.

Purdy studied the case where $\pts_1$ is a set of $m$ points on a line $\ell_1$ and $\pts_2$ is a set of $n$ points on a line $\ell_2$ (for example, see \cite[Section 5.5]{BMP05}).
He observed that, when $\ell_1$ and $\ell_2$ are parallel or orthogonal, we may have that $D(\pts_1,\pts_2)=\Theta(m+n)$.
Purdy conjectured that $D(\pts_1,\pts_2)$ is asymptotically larger when the lines are neither parallel nor orthogonal.
The current best bound for this problem, obtained by \cite{SSS13,SolyZahl24}, is
\[ D(\pts_1,\pts_2) = \Omega\left(\min\{m^{3/4}n^{3/4},m^2,n^2\}\right). \] 

The above result was generalized by Pach and de Zeeuw \cite{PachDZ17} to algebraic curves. 
Combined with a recent technique of Solymosi and Zahl \cite{SolyZahl24}, it leads to the following theorem.

\begin{theorem} \label{th:PachdeZeeuw}
Let $\gamma_1,\gamma_2\subset\RR^2$ be algebraic curves of degree at most $k$.
Let $\pts_1$ be a set of $m$ points on $\gamma_1$ and let $\pts_2$ be a set of $n$ points on $\gamma_2$.
If $\gamma_1$ and $\gamma_2$ are neither parallel lines, orthogonal lines, nor concentric circles, then
\[  D(\pts_1,\pts_2) = \Omega_k(\min\{m^{3/4}n^{3/4},m^2,n^2\}).\]
\end{theorem}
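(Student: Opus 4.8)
The plan is to run the Pach--de Zeeuw ``collision quadruple'' scheme for the squared-distance function and to replace its final incidence step by the proximity technique of \cite{SSS13,SolyZahl24}.

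\emph{Reductions.} Each of $\gamma_1,\gamma_2$ has $O_k(1)$ irreducible components, so by pigeonhole there are components $C_1\subseteq\gamma_1$, $C_2\subseteq\gamma_2$ with $|\pts_1\cap C_1|=\Omega(m/k)$ and $|\pts_2\cap C_2|=\Omega(n/k)$; since $D(\pts_1,\pts_2)\ge D(\pts_1\cap C_1,\pts_2\cap C_2)$, and since $(C_1,C_2)$ is still none of the three forbidden configurations --- this is the point at which the hypothesis is really used, at the level of components --- I may assume $\gamma_1,\gamma_2$ are irreducible; components that are single points, and the regime $\min\{m,n\}=O_k(1)$, are disposed of directly. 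After a generic rotation and the deletion of $O_k(1)$ points, I write $\gamma_1$ as $\le k$ graphs $y=f_i(x)$ of algebraic functions of complexity $O_k(1)$, and likewise $\gamma_2$; passing to one branch of each (at the cost of an $O_k(1)$ factor, and relabelling) gives branch maps $\phi_1,\phi_2$, parameter sets $T,S$ with $|T|=m$, $|S|=n$, and an algebraic function $F(t,s)=|\phi_1(t)-\phi_2(s)|^2$ of complexity $O_k(1)$, so that $D(\pts_1,\pts_2)=|F(T\times S)|$.

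\emph{From distinct distances to incidences.} Set $Q=\bigl|\{(t,s,t',s')\in T\times S\times T\times S:\ F(t,s)=F(t',s')\}\bigr|$. Cauchy--Schwarz over the $D$ distance values gives $Q\ge (mn)^2/D$, so it suffices to prove $Q=O_k(m^{5/4}n^{5/4}+m^2+n^2)$, which after splitting into the three size regimes yields $D=\Omega_k(\min\{m^{3/4}n^{3/4},m^2,n^2\})$. The terms of $Q$ with $t=t'$ or $s=s'$ sum to $O_k(mn)$ (for fixed $t$, the equation $F(t,s)=F(t,s')$ puts $\phi_2(s),\phi_2(s')$ on a circle about $\phi_1(t)$, hence has $O_k(1)$ solutions $s'$ per $s$). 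For $t\ne t'$, the set $\Gamma_{t,t'}=\{(s,s'):F(t,s)=F(t',s')\}$ is an algebraic curve of degree $O_k(1)$ in the $(s,s')$-plane, and the remaining part of $Q$ equals the incidence count between the $n^2$ grid points $S\times S$ and the $m^2$ curves $\Gamma_{t,t'}$ (decomposing into irreducible components first). The Pach--Sharir bound for a curve family with $O_k(1)$ degrees of freedom gives $O_k(m^{4/3}n^{4/3}+m^2+n^2)$ --- only the Pach--de Zeeuw exponent $2/3$ --- whereas running the Solymosi--Zahl proximity argument on this family (dyadically decomposing by the common value of $F$ and using that at each scale the relevant portions of the curves are confined to short arcs, the same dyadic argument that handles the two-line case applying here) improves this to $O_k(m^{5/4}n^{5/4}+m^2+n^2)$, as required.

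\emph{The main obstacle: the degeneracy analysis.} Both incidence bounds above require $\{\Gamma_{t,t'}\}$ to be non-degenerate: $(t,t')\mapsto\Gamma_{t,t'}$ generically finite-to-one, distinct curves meeting in $O_k(1)$ points, and no $\Gamma_{t,t'}$ containing a coordinate line. A component $\{s=s_0\}\subseteq\Gamma_{t,t'}$ forces $F(t',\cdot)$ constant, i.e.\ $\gamma_2$ a circle centered at $\phi_1(t')$, which rules out at most one point of $\pts_1$. Every remaining failure mode forces the curves $\{F(t,s_1)=F(t',s_1')\}$ and $\{F(t,s_2)=F(t',s_2')\}$ in the $(t,t')$-plane to coincide for a positive-dimensional family of $(s_1,s_1',s_2,s_2')$; an Elekes--R\'onyai/Elekes--Szab\'o-type analysis shows this happens only when $F(t,s)=\psi(g(t)+h(s))$ or $F(t,s)=\psi(g(t)h(s))$ for univariate $g,h,\psi$ of complexity $O_k(1)$. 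What remains --- and what I expect to be the genuinely delicate part --- is the purely geometric step: showing that the squared distance between two irreducible algebraic curves takes such a special form exactly when the curves are parallel lines (then $F$ is a function of $t-s$), orthogonal lines (then $F=t^2+s^2$ in suitable coordinates), or concentric circles (then, in the angular parametrization, $F$ is again a function of $t-s$). As the hypothesis forbids these three configurations, the family is non-degenerate, the bound on $Q$ holds, and the theorem follows.
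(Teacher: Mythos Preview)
Your overall scheme---distance energy, Cauchy--Schwarz, the curves $\Gamma_{t,t'}=\{F(t,s)=F(t',s')\}$, a Pach--Sharir incidence bound, and a proximity improvement---is the same as the paper's proof of Theorem~\ref{th:twocurves} (of which Theorem~\ref{th:PachdeZeeuw} is the algebraic special case). But your description of the proximity step is off: Solymosi--Zahl is not a dyadic decomposition by the value of $F$. The paper orders the points along each monotone arc, restricts to quadruples $(p_i,p_j,q_{i'},q_{j'})$ with $|i-j|\le cm$ and $|i'-j'|\le cn$, uses that this restricted energy $E_c$ is still $\Omega(cE)$, and applies Pach--Sharir to $\Theta(cm^2)$ curves against $\Theta(cn^2)$ points, obtaining $E_c=O_k(c^{4/3}m^{4/3}n^{4/3}+cm^2+cn^2+mn)$; the bound follows by choosing $c=\Theta(|D|/mn)$. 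Your target $Q=O_k(m^{5/4}n^{5/4}+m^2+n^2)$ is equivalent after this optimisation, but the mechanism you sketch is not the one that yields it.

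The substantive gap is the degeneracy analysis. You propose that failure of two degrees of freedom forces, via an Elekes--R\'onyai/Elekes--Szab\'o argument, a special form $F=\psi(g(t)+h(s))$ or $\psi(g(t)h(s))$, and then that this form geometrically characterises the three forbidden configurations. Neither implication is supplied, and neither is routine: the first would require extracting a functional equation from ``many $\Gamma_{t,t'}$ share a component'' and feeding it into an expansion theorem for \emph{algebraic} (not polynomial) $F$; the second is precisely the classification you flag as delicate and leave open. The paper takes a completely different, self-contained route. It shows (Lemma~\ref{le:samedist}) that if $C_{i,j}\cap C_{k,\ell}$ is infinite with $|p_ip_k|=|p_jp_\ell|$ then some isometry of $\gamma_2$ sends $p_i\mapsto p_j$; since a non-line, non-circle curve has $O_k(1)$ symmetries (Lemma~\ref{lem:syms}), only $O_k(m)$ curves $C_{i,j}$ are ``respected'' by a symmetry, and these form $\Gamma_0$. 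For the remaining curves, an explicit coordinate computation (Lemmas~\ref{lem:diffdist}, \ref{le:conic}, \ref{le:line}) shows that no three have infinite common intersection. The dual argument, swapping $\gamma_1$ and $\gamma_2$, establishes the ``at most $O_k(1)$ curves through two given points'' half of the degrees-of-freedom hypothesis---a step your outline also omits. In short, the paper's degeneracy analysis is hands-on and complete; yours defers the crux to an unproved characterisation.
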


We generalize Theorem \ref{th:PachdeZeeuw} to Pfaffian curves. 
Since a rigorous definition of Pfaffian curves is somewhat long and technical, we postpone it to  Section \ref{sec:pfaffian}. 
As a first intuition, we may consider curves defined by any combination of elementary functions. For example, curves defined by $y=e^x$, by $x^\pi + y^\pi= 1$, and even by $\ln (\arctan y) = \int_{-\infty}^x \frac{e^t}{t}dt$. 
  
\begin{theorem}[store=thtwocurves]
\label{th:twocurves}
Let $Q$ be a Pfaffian chain of order $r$ and chain degree $\alpha$. 
Let $f_1$ and $f_2$ be bivariate Pfaffian functions of degree $\beta$ in $Q$. 
Let $\gamma_1,\gamma_2\subset \RR^2$ be the curves respectively defined by $f_1$ and $f_2$. 
Assume that $\gamma_1$ and $\gamma_2$ do not contain arcs of parallel lines, orthogonal lines, or concentric circles.
Then the number of distinct distances between any $m$ points on $\gamma_1$ and $n$ points on $\gamma_2$ is $\Omega_{\alpha,\beta,r}(\min\{m^{3/4}n^{3/4}, m^2, n^2\})$. 
\end{theorem}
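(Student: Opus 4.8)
The plan is to run the argument of Pach and de Zeeuw, sharpened by the proximity technique of Solymosi and Zahl, and to replace every appeal to algebraicity by the corresponding property of Pfaffian sets recalled in Section~\ref{sec:pfaffian}: effective B\'ezout-type bounds (Khovanski{\u\i}'s theorem bounding the number of connected components of a Pfaffian variety), the cell decomposition theory for semi-Pfaffian sets, and the fact that the Pfaffian functions in a fixed chain $Q$ form a ring closed under partial differentiation, with complexity growth controlled by $\alpha,\beta,r$.

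\textbf{Reduction to a single parametrized arc pair.} By the cell decomposition of semi-Pfaffian sets, each curve $\gamma_i=\{f_i=0\}$ splits into $O_{\alpha,\beta,r}(1)$ pieces, each of which is either a single point or can be parametrized as the graph of a Pfaffian function over one of the coordinate axes; the number of pieces and the complexities of the parametrizations are bounded in terms of $\alpha,\beta,r$ (a linear parametrization is a Pfaffian function of chain order $0$, so coordinate-line segments need no special treatment). Point pieces force $m=1$ or $n=1$ and are trivial, so by pigeonhole it suffices to treat one arc of $\gamma_1$, parametrized as $(a,\psi_1(a))$ with $a$ ranging over a parameter set $A$ of size $m$, and one arc of $\gamma_2$, parametrized as $(b,\psi_2(b))$ with $b$ ranging over a set $B$ of size $n$, where $\psi_1,\psi_2$ are Pfaffian. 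By hypothesis this arc pair is not a pair of parallel line segments, nor of orthogonal line segments, nor of concentric circular arcs. Set $F(a,b)=(a-b)^2+(\psi_1(a)-\psi_2(b))^2$, a bivariate Pfaffian function of complexity bounded by $\alpha,\beta,r$; since $x\mapsto\sqrt{x}$ is injective on $[0,\infty)$, it suffices to lower bound $t:=|F(A\times B)|$.

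\textbf{Energy and incidences.} By Cauchy--Schwarz the quadruple count $E=\bigl|\{(a,a',b,b')\in A^2\times B^2:\ F(a,b)=F(a',b')\}\bigr|$ satisfies $E\ge (mn)^2/t$, so it suffices to bound $E$ from above. View $E$ as the number of incidences between the $n^2$ points of the grid $B\times B$ and the $m^2$ Pfaffian curves $\Lambda_{a,a'}=\{(b,b'):\ F(a,b)=F(a',b')\}$. Khovanski{\u\i}'s bound shows that these curves have complexity bounded by $\alpha,\beta,r$, that two distinct ones meet in $O_{\alpha,\beta,r}(1)$ points, and that the family has $O_{\alpha,\beta,r}(1)$ degrees of freedom --- precisely the input needed to feed this configuration into the (tame, B\'ezout-driven) proximity incidence bound of Solymosi and Zahl. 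That bound yields $t=\Omega_{\alpha,\beta,r}(\min\{m^{3/4}n^{3/4},m^2,n^2\})$ \emph{unless} the family $\{\Lambda_{a,a'}\}$ is degenerate, i.e.\ it collapses to far fewer than $m^2$ distinct curves, or some $\Lambda_{a,a'}$ contains a coordinate-line component and hence meets the grid in $\gg n$ points.

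\textbf{Excluding degeneracy --- the main obstacle.} It remains to show that degeneracy of $\{\Lambda_{a,a'}\}$ contradicts the non-degeneracy of the arc pair. In the algebraic setting this is exactly where the Elekes--R\'onyai classification enters, forcing $F(a,b)=\varphi(u(a)+v(b))$ or $\varphi(u(a)v(b))$; the difficulty is that there is no off-the-shelf Elekes--R\'onyai theorem for Pfaffian functions, so the underlying differential-algebraic argument must be redone in the Pfaffian category. The plan is to differentiate: degeneracy gives a functional relation among the level sets of $F$, and applying $\partial_a$ and $\partial_b$ keeps us inside the ring of Pfaffian functions in $Q$; using Khovanski{\u\i}-type finiteness to discard nonvanishing Pfaffian factors, one forces the ratio $\partial_a F/\partial_b F$ to factor as a function of $a$ times a function of $b$, equivalently that $F$ has one of the two special forms. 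Finally, substituting $F(a,b)=(a-b)^2+(\psi_1(a)-\psi_2(b))^2$ into the additive, respectively multiplicative, form and differentiating once more forces $\psi_1,\psi_2$ to be linear with $\gamma_1\parallel\gamma_2$, respectively $\gamma_1\perp\gamma_2$, or forces $\psi_1,\psi_2$ to parametrize concentric circular arcs --- the excluded configurations. Assembling the three steps and summing the contributions of the $O_{\alpha,\beta,r}(1)$ arc pairs proves the theorem. I expect this last step, the Pfaffian replacement for Elekes--R\'onyai, to be the technically heaviest part, since it is where the absence of a polynomial structure theory must be compensated by direct manipulation of the Pfaffian ODE system.
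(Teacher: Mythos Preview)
Your overall framework---reduce to a graph-parametrized arc pair, set up the bipartite distance energy, convert to point--curve incidences in $\RR^2$ via the parametrization, and finish with the Solymosi--Zahl proximity technique---matches the paper. The gap is entirely in your degeneracy step.

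You propose to rule out degeneracy by proving a Pfaffian Elekes--R\'onyai structure theorem: force $\partial_a F/\partial_b F$ to split as $u(a)v(b)$, hence $F=\varphi(u+v)$ or $\varphi(uv)$, then read off the excluded geometries. The paper explicitly flags this route as currently blocked: the Elekes--R\'onyai/Raz--Sharir--de Zeeuw machinery relies on Stein-type irreducibility of generic algebraic functions, and no analogue is known for Pfaffian functions. Your sketch (``discard nonvanishing Pfaffian factors'') hides exactly this missing ingredient---in the Pfaffian ring there is no unique factorization, no resultant theory, and no mechanism that converts ``many curves $\Lambda_{a,a'}$ coincide'' into a global separation of variables. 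You also need $k=2$ degrees of freedom specifically (not merely $O_{\alpha,\beta,r}(1)$) to get the exponent $3/4$ out of Pach--Sharir, and that too requires the dual non-degeneracy analysis you have not supplied.

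What the paper does instead is follow Pach--de Zeeuw directly and classify when two or three curves $C_{i,j}$ can share infinitely many points, by a purely geometric case split. If $|p_ip_k|=|p_jp_\ell|$ and $C_{i,j}\cap C_{k,\ell}$ is infinite, a rigid-motion argument produces an isometry of $\overline{\gamma_2}$ sending $p_i\mapsto p_j$; a separate lemma bounds the symmetry group of a non-line, non-circle irreducible Pfaffian curve by $O_{\alpha,\beta,r}(1)$, so only $O_{\alpha,\beta,r}(m')$ ``bad'' curves arise this way and are placed in $\Gamma_0$. If the three distance equalities all fail, an explicit elimination (subtract the circle equations, reduce to a single conic in $(q_{2,x},q_{2,y})$) shows $|C_{i,j}\cap C_{k,\ell}\cap C_{s,t}|=O_{\alpha,\beta,r}(1)$ via the Pfaffian B\'ezout bound. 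Running the same analysis with the roles of $\gamma_1,\gamma_2$ swapped gives the $2$-degrees-of-freedom condition on the point side. No structure theorem for $F$ is ever invoked; the excluded configurations (parallel/orthogonal lines, concentric circles) enter only through the symmetry count and a light pruning of $\pts_1,\pts_2$. If you want to repair your write-up, replace the Elekes--R\'onyai paragraph with this isometry/elimination argument.
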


By taking $\gamma_1=\gamma_2$, Theorem \ref{th:twocurves} also implies the following result.

\begin{corollary}\label{co:onecurve} 
Consider $f \in \Pf_2(\alpha, \beta, r) \setminus \{0\}$ defining a curve $\gamma = Z(f) \subseteq \RR^2$ that does not contain an arc of a line or a circle.
Then the number of distinct distances spanned by any set of $n$ points on $\gamma$ is $\Omega_{\alpha,\beta,r}(n^{3/2})$. 
\end{corollary}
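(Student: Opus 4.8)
The plan is to mimic the Pach–de Zeeuw strategy, which underlies Theorem~\ref{th:twocurves}, in the degenerate situation $\gamma_1=\gamma_2=\gamma$ and to verify that the hypotheses of Theorem~\ref{th:twocurves} are met. First I would observe that Corollary~\ref{co:onecurve} is \emph{not} an immediate consequence of setting $\gamma_1=\gamma_2$ in Theorem~\ref{th:twocurves}, because that theorem excludes only pairs $(\gamma_1,\gamma_2)$ that are \emph{two} parallel lines, \emph{two} orthogonal lines, or \emph{two} concentric circles, whereas here a single curve $\gamma$ might itself contain an arc of a line or a circle. If $\gamma$ contains no arc of a line and no arc of a circle, then in particular the pair $(\gamma,\gamma)$ does not consist of (arcs of) parallel lines, orthogonal lines, or concentric circles, so Theorem~\ref{th:twocurves} applies directly with $\gamma_1=\gamma_2=\gamma$, $m=n$, $\pts_1=\pts_2=\pts$, giving $D(\pts,\pts)=\Omega_{\alpha,\beta,r}(\min\{n^{3/4}n^{3/4},n^2,n^2\})=\Omega_{\alpha,\beta,r}(n^{3/2})$, which is exactly the claimed bound.

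So the only real content is handling the case where $\gamma$ \emph{does} contain an arc of a line or circle but we still want to derive the bound for the remaining, "generic" part of $\gamma$; however, re-reading the statement, the hypothesis explicitly \emph{excludes} that case, so no such argument is needed. Hence the proof reduces to two bookkeeping steps: (i) confirm that a single Pfaffian curve $\gamma=Z(f)$ with $f\in\Pf_2(\alpha,\beta,r)\setminus\{0\}$ is a legitimate input to Theorem~\ref{th:twocurves} when we take $f_1=f_2=f$ — here one must check that $f$ indeed belongs to a common Pfaffian chain $Q$ of order $r$ and chain degree $\alpha$ and has degree $\beta$ in $Q$, which is exactly the meaning of $f\in\Pf_2(\alpha,\beta,r)$; and (ii) verify that "no arc of a line or circle" on $\gamma$ rules out all three forbidden configurations for the pair $(\gamma,\gamma)$. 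Two parallel lines forming arcs of $\gamma$ would require $\gamma$ to contain a line arc; two orthogonal lines likewise; two concentric circles would require $\gamma$ to contain a circle arc. All are excluded by hypothesis.

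I would then simply apply Theorem~\ref{th:twocurves} with $n_1=n_2=n$ and the common point set, and simplify the $\min$: since $n^{3/4}\cdot n^{3/4}=n^{3/2}\le n^2$ for $n\ge 1$, the minimum is $n^{3/2}$, yielding $D(\pts)=\Omega_{\alpha,\beta,r}(n^{3/2})$ as required.

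I do not expect any genuine obstacle here; the corollary is a direct specialization. The only subtlety worth a sentence in the write-up is the one flagged above — that the exclusion in Corollary~\ref{co:onecurve} must be phrased in terms of arcs on the single curve $\gamma$ (rather than as a relation between two distinct curves), and that this single-curve exclusion is precisely what is needed to invoke the two-curve theorem with $\gamma_1=\gamma_2$. If one wanted a version of the corollary allowing $\gamma$ to contain line or circle arcs, one would have to first discard those arcs (finitely many, by tameness / the bounded number of components of Pfaffian sets) and keep only the points on the remaining part, but that refinement is outside the stated hypothesis and I would not pursue it.
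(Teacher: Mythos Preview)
Your proposal is correct and matches the paper's approach exactly: the paper gives no separate proof of Corollary~\ref{co:onecurve} and simply states that it follows from Theorem~\ref{th:twocurves} by taking $\gamma_1=\gamma_2$. Your verification that the hypothesis ``$\gamma$ contains no arc of a line or a circle'' guarantees the pair $(\gamma,\gamma)$ avoids the three forbidden configurations, together with the simplification $\min\{n^{3/2},n^2,n^2\}=n^{3/2}$, is all that is needed.
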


\begin{wrapfigure}[10]{r}{5.2cm}
\vspace{-11pt}
\includegraphics[scale=0.25]{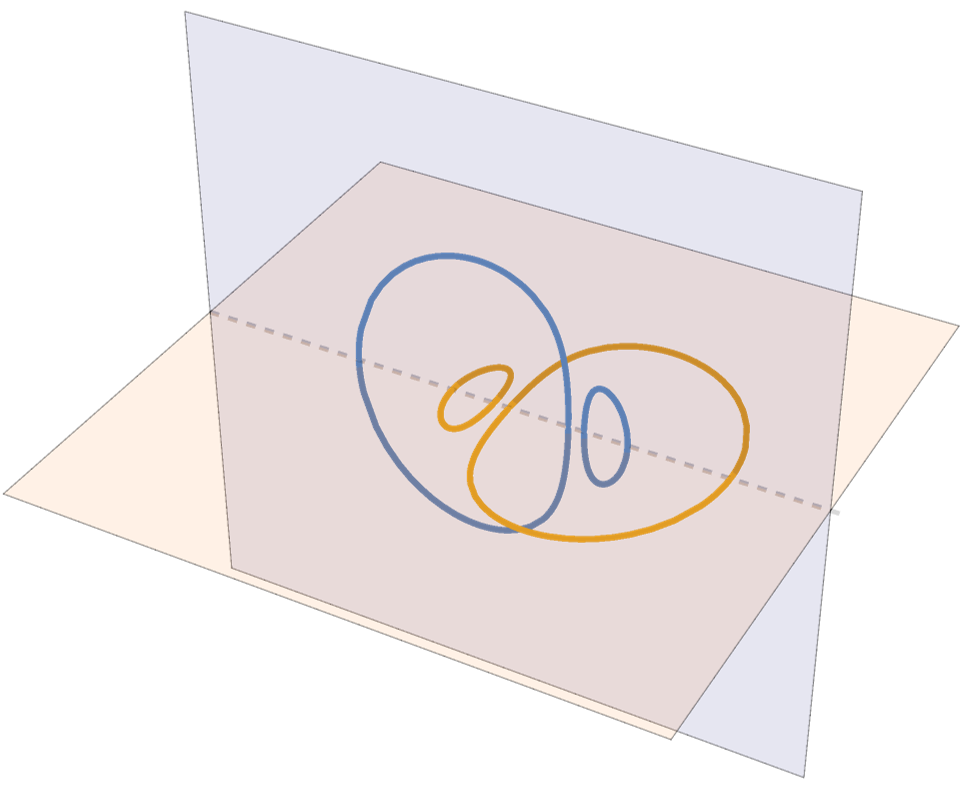}
\caption{A log-circles configuration}
\label{fig:log-circles}
\end{wrapfigure}
Theorem \ref{th:twocurves} shows that, when generalizing Theorem \ref{th:PachdeZeeuw} to Pfaffian curves, the exceptional cases remain to be parallel lines, orthogonal lines, and concentric circles.
Interestingly, this is no longer the case in $\RR^3$. 
As shown in \cite{ALPSV24}, there is a small number of distances between point sets on two \emph{log-circles} defined as  
 \[(x-B)^2+y^2 =D+A\cdot \ln(x)\quad \text{ and } \quad z=0, \]
\[ x^2 +z^2 =D+A\cdot \ln(x - B)\quad \text{ and } \quad y=0; \]
see Figure \ref{fig:log-circles}. This seems to hint that, in dimension $d\ge 3$, a proof for distinct distances between Pfaffian curves will require additional new ideas.

Characterizing the point sets in $\RR^2$ that span a sublinear number of distinct distances is a central open problem of Erd\H os. 
It is considered to be a highly difficult problem, for which not much is known.
For a survey of the problem and the few known related results, see \cite[Section 2]{Sheffer14}.
Theorem \ref{th:twocurves} immediately leads to the following related bound. 

\begin{corollary}
Let $\pts\subset\RR^2$ be a set of $n$ points with $D(\pts)=o(n)$. Then any constant complexity\footnote{See Section \ref{sec:pfaffian} for a definition of the complexity of a Pfaffian set.} Pfaffian curve that contains no arcs of lines or circles contains $o(n^{3/4})$ points of $\pts$.
\end{corollary}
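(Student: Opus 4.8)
The plan is to obtain this statement as an immediate consequence of Corollary~\ref{co:onecurve}, by passing to the sub-configuration of $\pts$ that lies on the given curve.

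Concretely, suppose $\gamma$ is a Pfaffian curve whose complexity is bounded by some constant $c$ and which contains no arc of a line or of a circle; thus $\gamma = Z(f)$ for a nonzero bivariate Pfaffian function $f$ with chain order $r$, chain degree $\alpha$, and degree $\beta$ all at most $c$. I would set $\pts' := \pts\cap\gamma$ and $k := |\pts'|$, and the goal is to show $k = o(n^{3/4})$.

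The first step is to apply Corollary~\ref{co:onecurve} to the $k$ points of $\pts'$ on $\gamma$: since $f \neq 0$ and $\gamma$ contains no line or circle arc, this yields $D(\pts') = \Omega_{\alpha,\beta,r}(k^{3/2})$. Because $\alpha,\beta,r$ are all at most the fixed constant $c$, the implied constant depends only on $c$, so $D(\pts') \ge c_1 k^{3/2}$ for some $c_1 = c_1(c) > 0$ (the inequality being vacuous when $k$ is bounded, which is harmless). The second step is the trivial monotonicity $D(\pts') \le D(\pts)$, valid because $\pts' \subseteq \pts$ forces the distance set of $\pts'$ to be contained in that of $\pts$. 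Combining these with the hypothesis $D(\pts) = o(n)$ gives $c_1 k^{3/2} \le D(\pts) = o(n)$, hence $k = O\bigl(D(\pts)^{2/3}\bigr) = o(n^{2/3})$, which is in particular $o(n^{3/4})$.

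There is essentially no obstacle here beyond invoking Corollary~\ref{co:onecurve}: the statement is a direct corollary, and the argument in fact delivers the sharper exponent $2/3$ rather than $3/4$. The one point that deserves a moment's care is that the constant hidden in the $\Omega$ of Corollary~\ref{co:onecurve} is uniform over all curves of complexity at most $c$; this is exactly what the subscript in $\Omega_{\alpha,\beta,r}$ provides, once the ``constant complexity'' assumption is used to bound $\alpha,\beta,r$.
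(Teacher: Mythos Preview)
Your argument is correct and is exactly the derivation the paper has in mind: the paper states this corollary without proof, saying only that it ``immediately'' follows from Theorem~\ref{th:twocurves}, and your route via Corollary~\ref{co:onecurve} (which is Theorem~\ref{th:twocurves} specialized to $\gamma_1=\gamma_2$) is precisely that. Your observation that the argument actually yields the sharper bound $o(n^{2/3})$ is also correct.
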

   
\parag{The proof technique.} 
The proof of Theorem \ref{th:twocurves} begins by imitating the proof of Pach and de Zeeuw \cite{PachDZ17} for the algebraic case. 
In particular, we define a bipartite distance energy, study isometries of curves, and obtain an incidence problem (these terms are defined in detail below).

However, since Pfaffian curves behave differently than algebraic curves, several parts of the proof need to be replaced. 
For example, the projection of an algebraic variety to a lower dimensional space is well-behaved.\footnote{More precisely, the Zariski closure of the projection of an algebraic variety is algebraic.}
On the other hand, projections of semi-Pfaffian sets need not be semi-Pfaffian, or even coverable by a fixed number of semi-Pfaffian sets. Such an example is provided in \cite{Osgood16}.
Thus, we need to replace a step of the proof from \cite{PachDZ17} that projects curves from $\RR^4$ down to $\RR^2$.
Instead, we rely on a property of parameterizations of Pfaffian curves. 
Algebraic curves do not satisfy this property. 

When trying to generalize expanding polynomial results such as \cite{RSdZ16,RSS16} to Pfaffian functions, we stumble upon similar issues.
In particular, these results rely on the property that a generic algebraic function is irreducible \cite{Stein89}. 
It is currently unknown whether such a property holds for Pfaffian functions. 

We also rely on the proximity technique of Solymosi and Zahl \cite{SolyZahl24}. 
This technique easily extends to the case of distinct distances between two curves.
However, it may not extend to other cases involving Pfaffian functions, due to the issues described above.

\parag{The paper structure.}   
In Section \ref{sec:pfaffian}, we define Pfaffian functions and Pfaffian sets in detail, and recall useful topological results about Pfaffian sets. 
In Section \ref{sec:proofoftheorems}, we prove Theorem \ref{th:twocurves}. 
The more algebraic portions of the proof are stated as lemmas whose proofs are postponed to Section \ref{sec:LemmaCij}.
In other words, Section \ref{sec:proofoftheorems} focuses on the combinatorial aspects of the proof, while Section \ref{sec:LemmaCij} focuses on the algebraic aspects.

\parag{Acknowledgments.} We thank Sida Li, Cosmin Pohoata, and Konrad Swanepoel for helpful discussions about related topics. AN would like to particularly thank Nicolai Vorobjov for answering questions on email. AN was supported by EPSRC Grant EP/V003542/1.

\section{Definitions and Preliminaries} 
\label{sec:pfaffian}

In this section, we study Pfaffian functions, Pfaffian sets, and other related objects.
For additional information about these topics, see the survey of Gabrielov and Vorobjov \cite{GV04}.

Let $\RR[x_1, \ldots, x_d]$ be the polynomial ring in coordinates $x_1, \ldots, x_d$, and we denote a point in $\RR^d$ as $\vecx = (x_1, \ldots, x_d)$.
Consider an open set $U \subseteq \RR^d$.
We recall that a function $f: U\to \RR$ is \emph{smooth} in $U$ if $f$ is infinitely differentiable in $U$.
The set of smooth functions in $U$ is $C^\infty(U)$.
We say that $\gamma\subset\RR^d$ is an \emph{arc} if there exists a continuous function from an infinite interval of $\RR$ to $\gamma$.
A \emph{curve} is the union of finitely many arcs.

\parag{Pfaffian chains and Pfaffian functions.}
Let $U\subseteq \RR^d$ be an open set.
A \emph{Pfaffian chain} over $U$ of \emph{order} $r$ and  \emph{chain-degree} $\alpha$  is a sequence of functions $\vec{q} = (q_1, \ldots, q_r) \in (C^{\infty}(U))^r$ that satisfies the following:
for every $1\le i \le r$ and $1 \le j \le d$, there exists a polynomial $P_{i, j} \in \RR[x_1, \ldots, x_d, y_1, \ldots, y_j]$ of degree at most $\alpha$ such that 
\begin{equation} \label{eq:pfaffian-differential-condition}
\frac{\partial q_j(\vecx)}{\partial x_i}   = P_{i,j}(\vecx, q_1(\vecx), \ldots, q_j(\vecx)).
\end{equation}

A function $f \in C^\infty(U)$ is called \emph{Pfaffian} on an open domain $U\subset \RR^d$ with respect to the Pfaffian chain $\vec{q} = (q_1, \ldots, q_r) \in (C^{\infty}(U))^r$ if there exists $Q_f \in \RR[x_1, \ldots, x_d, y_1, \ldots, y_r]$ of degree $\beta$ such that
\begin{equation} \label{eq:PfaffianFunc} 
f(\vecx) = Q_f(\vecx, q_1(\vecx), \ldots, q_r(\vecx)). 
\end{equation}
The \emph{order} of $f$ is the order $r$ of $\vec{q}$.
We say that $f$ has  \emph{format} $(\alpha,\beta,r)$.
We denote by $\Pf_d(\beta,\vec{q})$ the set of $d$-variate Pfaffian functions with respect to a Pfaffian chain $\vec{q}$ defined by a polynomial of degree at most $\beta$ from the ring $\RR[x_1, \ldots, x_d, y_1, \ldots, y_r]$. Thus we have $f\in \Pf_d(\beta, \vec{q})$.

The above is currently the standard and most common definition of Pfaffian functions, although other definitions exist in the literature. 
In particular, Khovanski{\u\i}'s original definition in \cite{Khovanskii91} is slightly more general than the above. Both definitions lead to nearly the same class of Pfaffian functions, albeit sometimes with different formats.

Examples of Pfaffian functions:
\begin{itemize}[itemsep=1pt,topsep=1pt]
\item A degree $D$ polynomial $f\in \RR[x_1, \ldots, x_d]$ is Pfaffian function of order 0 and chain-degree 0. 
That is, we may use an empty Pfaffian chain.
In this case, the format is $(0,D,0)$.
\item For $a\in \RR\setminus\{0\}$, $\vec{q} = (q_1)$ with $q_1(x)=e^{ax}$ is a Pfaffian chain of order 1 and chain-degree 1 given $\frac{\partial}{\partial x} q_1(x) = a q_1(x)$. Thus any $f \in \RR[x, e^{ax}]$ is a Pfaffian function with respect to $\vec{q}$.
\item Further generalizing the above, we note that $(e^{ax},e^{e^{ax}}, e^{e^{e^{ax}}},\ldots)$ is a Pfaffian chain. 
Thus, every $f\in \RR[x,e^{ax},e^{e^{ax}}, e^{e^{e^{ax}}},\ldots]$ is a Pfaffian function.
\item When $q_1(x) = \tan x$, the chain $\vec{q} = (q_1)$ is Pfaffian of order 1 and chain-degree 2, in the domain $U=\RR\setminus \{a+\pi/2\ :\ a\in\ZZ\}$. This is immediate from $\frac{\partial q_1(x)}{\partial x} = \mathrm{sec}^2 x = 1+q_1(x)^2$. Thus, every $f\in \RR[x, \tan x]$ is Pfaffian in $U$.
\item In $(0,\infty)$, for $q_1(x) = \frac{1}{x}$ and $q_2(x) = \ln x$, we have that $\vec{q} = (q_1, q_2)$ is a Pfaffian chain of order 2.
This is because $\frac{\partial q_1(x)}{\partial x} = -q_1(x)^2$ and $\frac{\partial q_2(x)}{\partial x} = q_1(x)$.
Thus, every $f\in \RR[x, \frac{1}{x}, \ln x]$ is Pfaffian in $(0,\infty)$.
\item In $\RR\setminus \{0\}$, for $q_1(x) = \frac{1}{x}$ and $q_2(x) = x^m$, $\vec{q} = (q_1, q_2)$ is a Pfaffian chain, for any real $m>0$.
This is immediate from the previous example and $\frac{\partial q_2(x)}{\partial x} = m q_1(x) q_2(x)$.
This implies that every $f\in\RR[x,\frac{1}{x},x^m]$ is Pfaffian in $\RR\setminus \{0\}$.
\item We can generalize the above examples to multivariate functions. 
For example, for $m_1,\ldots,m_d\in \RR$, the function $f(x_1,\ldots,x_d) = x_1^{m_1}\cdots x_d^{m_d}$ is Pfaffian due to the chain $(x_1^{-1},\ldots,x_d^{-1},x_1^{m_1}\cdots x_d^{m_d})$. The domain is $(0,\infty)^d$.
\end{itemize}

\parag{Pfaffian sets.}
For a $d$-variate function $f$ on domain $U \subseteq \RR^d$, we define the \emph{zero set} of $f$ on $U$ as 
\[ Z(f) = \{p\in U\ :\ f(p) = 0\}. \]

Consider an open $U\subset \RR^d$.
A subset $W \subseteq U$ is \emph{Pfaffian} if there exist Pfaffian functions $f_1,\ldots,f_t: U \to \RR$ such that 
\[ W = Z(f_1)\cap \cdots \cap Z(f_t). \]
 
A subset $W \subseteq U$ is \emph{semi-Pfaffian} if there exist Pfaffian functions $f_1,\ldots,f_t: U \to \RR$ and a Boolean function $\Phi: \{0, 1\}^t \to \{0, 1\}$ such that 
\begin{equation*}
W = \left\{ \vecx \in U\ :\ \Phi\left(f_1(\vecx) \ge 0, \ldots, f_t(\vecx) \ge 0\right) = 1 \right\}.
\end{equation*}
The \emph{order} of $W$ is the minimum length of a common Pfaffian chain $\vec{q}$ such that $f_1,\ldots,f_t$ are all Pfaffian with respect to $\vec{q}$.
The \emph{complexity} of $W$ is the coordinatewise maximum of all formats of $f_1,\ldots,f_t$.
These definitions hold for both Pfaffian and semi-Pfaffian sets.

We note that the notion of a set being semi-Pfaffian heavily relies on the domain $U$. 
A set may be semi-Pfaffian according to one domain and Pfaffian according to another. 
For example, $\{y=e^x\ : x>0\}$ is semi-Pfaffian in $\RR^2$ and Pfaffian in $(0,\infty)^2$.

For brevity, from here on, we omit reference to the domain $U$. 
We still implicitly assume that all occurring
Pfaffian functions are defined on the same domain.

The \emph{dimension} of a semi-Pfaffian set $W$ is defined to be the largest integer $k$ such that the set contains a homeomorphism of $[0,1]^k$.
The \emph{Zariski closure} of a set $W$, denoted $\overline{W}$, is the smallest Pfaffian set that contains $W$.
The Zariski closure is well-defined in the following sense: 
There exists a set $\overline{W}$ such that each Pfaffian set that contains $W$ also contains $\overline{W}$.
For brevity, we also refer to $\overline{W}$ as the \emph{closure} of $W$.

Consider a Pfaffian set $W$ that is defined by Pfaffian functions $f_1,\ldots,f_t$.
A point $p\in W$ is \emph{smooth} if all partial derivatives of all orders exist for each of $f_1,\ldots,f_t$ at $p$.
A point $p\in W$ is \emph{singular} if the the dimension of the tangent space of $W$ at $p$ does not equal to the dimension of $W$. 
This includes the case where the tangent space of $W$ at $p$ is not well-defined. 

We say that a Pfaffian set $W$ is \emph{reducible} if there exist Pfaffian sets $W_1$ and $W_2$ such that $W_1,W_2\neq W$ and $W_1 \cup W_2 = W$.
If $W$ is not reducible, then we say that it is \emph{irreducible}.

Next, consider a \emph{semi-}Pfaffian set $W\subset \RR^d$ that is defined by Pfaffian functions $f_1,\ldots,f_t$.
A point $p\in W$ is \emph{smooth} if $p$ is a smooth point of $\overline{W}$.
Similarly, $p\in W$ is \emph{singular} if it is a singular point of $\overline{W}$.
Finally, $W$ is irreducible if $\overline{W}$ is irreducible.

\parag{Stratification, components, and intersections.}
We now study some topological properties of semi-Pfaffian sets. 
While we state the dependence of some bounds in terms of the complexity of the sets, we only require bounds of the form $O_{\alpha, \beta,r,d}(1)$.

We require the following upper bound on the number of connected components of a Pfaffian set by Gabrielov and Vorobjov \cite[Corollary 3.3]{GV04}.

\begin{theorem} \label{th:ConnComp}
Let $\vec{q}$ be a Pfaffian chain of order $r$ and chain-degree $\alpha$. 
For any $d$-variate $f_1,\ldots, f_t \in \Pf_d(\beta, \vec{q})$ of format $(\alpha,\beta,r)$, the number of connected components of $Z(f_1)\cap\cdots \cap Z(f_t)$  is at most
\[ 2^{\binom{r}{2}+1} \beta (\alpha+2\beta-1)^{d-1}\left((2d-1)(\alpha+\beta)-2d+2\right)^r. \]
\end{theorem}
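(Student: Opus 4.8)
The plan is to run the classical Khovanski{\u\i}--Rolle argument for bounding the topology of Pfaffian varieties (see \cite{Khovanskii91,GV04}). \textbf{Step 1 (reduction to a single smooth hypersurface).} Since $Z(f_1)\cap\cdots\cap Z(f_t) = Z(f_1^2+\cdots+f_t^2)$, we may replace the $f_i$ by the single function $f=f_1^2+\cdots+f_t^2$, which is Pfaffian with respect to the \emph{same} chain $\vec{q}$ and has format $(\alpha,2\beta,r)$; this is the source of the quantity $\alpha+2\beta-1$ in the bound. Perturbing the coefficients of the defining polynomial $Q_f$ generically makes $0$ a regular value, so we may assume $Z(f)$ is a smooth hypersurface, recovering the general case by a limiting argument together with a separate induction on $d$ to handle the singular locus.

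\textbf{Step 2 (from connected components to a zero-dimensional system).} Fix a generic linear form $\ell(\vecx)=v_1x_1+\cdots+v_dx_d$. Each bounded component of the smooth hypersurface $Z(f)$ contains a maximum \emph{and} a minimum of $\ell|_{Z(f)}$, hence at least two critical points, and these critical points are the zeros in $\RR^d$ of the system
\[ f=0,\qquad v_1\,\tfrac{\partial f}{\partial x_i}-v_i\,\tfrac{\partial f}{\partial x_1}=0\quad(2\le i\le d). \]
The key point is that each $\tfrac{\partial f}{\partial x_i}=\tfrac{\partial Q_f}{\partial x_i}+\sum_{j=1}^{r}\tfrac{\partial Q_f}{\partial y_j}\,P_{i,j}$ is again Pfaffian with respect to $\vec{q}$ — no new transcendental functions appear, precisely because of the differential relation~\eqref{eq:pfaffian-differential-condition} — and has degree at most $\alpha+2\beta-1$. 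Thus the number of bounded components of $Z(f)$ is at most half the number of solutions of a zero-dimensional system of $d$ Pfaffian equations over $\vec{q}$, of degrees $2\beta$ and $\alpha+2\beta-1$. Unbounded components are absorbed by the standard device of intersecting $Z(f)$ with a large sphere, which reduces their count to a bounded-component count for a Pfaffian variety in dimension $d-1$, fed back into the same recursion.

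\textbf{Step 3 (bounding the zero-dimensional system by induction on the chain order $r$).} When $r=0$ all equations are polynomials and B\'ezout gives the product of the degrees, which — combined with the ``two critical points per bounded component'' saving of Step 2 — accounts for the factor $\beta(\alpha+2\beta-1)^{d-1}$ (the leading $\beta$ rather than $2\beta$ reflecting the halving). For the inductive step one eliminates the last chain function $q_r$: introduce a fresh variable $y$ for $q_r$, rewrite each equation $g(\vecx)=\widetilde g(\vecx,q_r(\vecx))$ as $\widetilde g(\vecx,y)=0$ in $\RR^{d+1}$, and adjoin the graph of $q_r$, which by~\eqref{eq:pfaffian-differential-condition} is cut out by equations involving $q_1,\dots,q_{r-1}$ only. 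Intersecting $d-1$ of these hypersurfaces gives a curve, and Rolle's theorem in the $y$-direction bounds the number of zeros of the remaining equation on each branch by one more than the number of zeros of a ``Rolle determinant'' — the Jacobian determinant of the gradients of the others — which is again Pfaffian over $q_1,\dots,q_{r-1}$ with degree inflated in a controlled way (each ordinary differentiation costs a bounded multiple of the degree, each use of~\eqref{eq:pfaffian-differential-condition} costs an additive $\alpha$). Iterating through all $r$ chain functions, the pairwise interaction of the hypersurfaces created at the successive stages yields the factor $2^{\binom{r}{2}+1}$ and the cumulative degree growth yields $\big((2d-1)(\alpha+\beta)-2d+2\big)^r$.

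\textbf{Main obstacle.} The delicate part is the simultaneous upkeep, through the entire Rolle recursion, of two invariants: first, that every auxiliary function produced (partial derivatives, Rolle determinants, sphere-intersection equations) stays Pfaffian with respect to $\vec{q}$ or an initial segment of it — guaranteed in principle by~\eqref{eq:pfaffian-differential-condition}, but needing verification at each elimination step; and second, the exact degree and constant accounting required to land on the stated bound rather than a cruder exponential one, in particular pinning down the precise exponent-of-two factor and the exact degree polynomials above. The genericity and compactness reductions of Steps 1--2 (non-smooth points, non-regular values, unbounded components) are by now routine in the Pfaffian literature, but still demand the usual careful limiting arguments.
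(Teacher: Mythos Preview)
The paper does not prove Theorem~\ref{th:ConnComp}; it is quoted verbatim as \cite[Corollary~3.3]{GV04} and used as a black box. So there is no ``paper's own proof'' to compare against.

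That said, your sketch is a faithful outline of the Khovanski{\u\i}--Rolle argument that underlies the cited result: the sum-of-squares reduction to a single hypersurface, the Morse/critical-point reduction to a zero-dimensional system, and the inductive elimination of chain functions via Rolle's theorem are exactly the ingredients in \cite{Khovanskii91,GV04}. As a high-level proof plan it is correct. The honest caveat is the one you already flag under ``Main obstacle'': turning this outline into the \emph{exact} constant $2^{\binom{r}{2}+1}\beta(\alpha+2\beta-1)^{d-1}\big((2d-1)(\alpha+\beta)-2d+2\big)^r$ requires precise degree bookkeeping through the recursion (and the handling of unbounded components and non-regular values), which your sketch gestures at but does not carry out. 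For the purposes of this paper none of that matters, since the theorem is invoked only for the qualitative conclusion $O_{\alpha,\beta,r,d}(1)$; a citation is the appropriate level of detail here.
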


Consider a semi-Pfaffian $W\subset \RR^d$. 
A \emph{stratum} of $W$ is a smooth semi-Pfaffian subset of $W$. 
A $k$-dimensional stratum $S$ is \emph{basic} if there exist $d-k$ Pfaffian functions that vanish on $S$ and whose gradients are linearly independent at every point of $S$.
A \emph{basic weak stratification} of $W$ is a partition of $W$ into a disjoint union of basic strata. The following theorem establishes that every semi-Pfaffian set has a basic weak stratification

\begin{theorem} \label{th:Components}
Consider a semi-Pfaffian set $W \subseteq \RR^d$ of format $(\alpha,\beta,r)$. 
Then $W$ has a basic weak stratification. The number of strata, the number of connected components of each stratum, and the complexity of each stratum 
are $O_{\alpha, \beta,r,d}(1)$.
\end{theorem}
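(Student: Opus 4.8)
The plan is to build the stratification by a recursion on $\dim W$: at each stage one peels off a smooth top-dimensional piece, cuts it into finitely many basic strata by a Jacobian-rank stratification, and recurses on a semi-Pfaffian remainder of strictly smaller dimension. The quantitative bounds then come from tracking Pfaffian formats through the at most $d+1$ levels of the recursion, together with Theorem~\ref{th:ConnComp} at the very end.

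\emph{Reduction to a Pfaffian set.} Writing the Boolean function $\Phi$ in disjunctive normal form, and then ordering the resulting pieces and replacing the $j$-th one by itself minus the union of the earlier ones, expresses $W$ as a \emph{disjoint} union of $O_{\alpha,\beta,r,d}(1)$ sets of the form $X=V\cap G$, where $V=Z(f_{i_1})\cap\cdots\cap Z(f_{i_a})$ is Pfaffian of format $(\alpha,\beta,r)$ in the original chain $\vec q$ and $G=\{g_1>0\}\cap\cdots\cap\{g_b>0\}$ is open and semi-Pfaffian. If $\{S_\sigma\}$ is a basic weak stratification of $V$, then $\{S_\sigma\cap G\}$ is one of $X$: each $S_\sigma\cap G$ is an open subset of the smooth manifold $S_\sigma$, hence a smooth manifold of the same dimension, still cut out (as a zero set with linearly independent gradients) by the same functions, and still semi-Pfaffian of bounded complexity. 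So it suffices to stratify a single Pfaffian set $V\subseteq\RR^d$ of format $(\alpha,\beta,r)$.

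\emph{Stratifying $V$.} Let $e=\dim V$. The singular locus $\operatorname{Sing}(V)$ --- the set of $p\in V$ at which the Jacobian of $f_{i_1},\dots,f_{i_a}$ has rank $\neq d-e$ --- is cut out inside $V$ by sign and vanishing conditions on the $(d-e)\times(d-e)$ and $(d-e+1)\times(d-e+1)$ minors of that Jacobian; since partial derivatives of Pfaffian functions are again Pfaffian with a controlled increase in format, and these minors are polynomial combinations of such derivatives, $\operatorname{Sing}(V)$ is semi-Pfaffian of bounded complexity, and it has dimension $<e$. Hence $V^{\circ}:=V\setminus\operatorname{Sing}(V)$ is a smooth $e$-manifold that is semi-Pfaffian of bounded complexity. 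Subdivide $V^{\circ}$ by recording which $(d-e)\times(d-e)$ minor is the lexicographically first nonvanishing one: for each of the $O_{\alpha,\beta,r,d}(1)$ index sets $I$, the open subset $S_I=\{p\in V^{\circ}:M_I(p)\neq 0\}$ is a smooth $e$-manifold on which the $d-e$ functions indexed by $I$ vanish with linearly independent gradients, so $S_I$ is a basic $e$-dimensional stratum. The $S_I$ cover $V^{\circ}$; a routine refinement --- replace $S_I$ by the part not covered by the earlier $S_{I'}$, keep its topological interior inside $V^{\circ}$ as the actual stratum, and send the lower-dimensional residue back into the recursion, using that interiors and frontiers of semi-Pfaffian sets are again semi-Pfaffian of complexity bounded in terms of the input --- turns this cover into $O_{\alpha,\beta,r,d}(1)$ disjoint basic $e$-strata together with a semi-Pfaffian remainder (that residue, together with $\operatorname{Sing}(V)$) of dimension $<e$ and bounded complexity. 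Recurse on the remainder.

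\emph{Bounds.} The recursion strictly decreases dimension, so it halts after at most $d+1$ steps; since each step multiplies the number of strata by $O_{\alpha,\beta,r,d}(1)$ and raises all formats by an amount bounded in terms of $\alpha,\beta,r,d$, the final partition consists of $O_{\alpha,\beta,r,d}(1)$ basic strata, each of complexity $O_{\alpha,\beta,r,d}(1)$. Finally, each stratum is a semi-Pfaffian set of bounded complexity, so by the semi-Pfaffian analogue of Theorem~\ref{th:ConnComp} it has $O_{\alpha,\beta,r,d}(1)$ connected components.

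\emph{Main obstacle.} The combinatorial skeleton above is standard; the real work is the Pfaffian bookkeeping --- verifying that every set produced in the recursion (singular loci, Jacobian-minor loci, topological interiors and frontiers) is genuinely semi-Pfaffian with format controlled solely by $\alpha,\beta,r,d$, and that this control survives the $d+1$ levels of recursion. This rests on the explicit closure of Pfaffian functions under differentiation and on the separately established fact that closures and frontiers of semi-Pfaffian sets are semi-Pfaffian of controlled complexity. Asking only for a \emph{weak} stratification is what lets us bypass the Whitney/frontier conditions, whose complexity would be considerably harder to bound.
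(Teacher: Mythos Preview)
The paper does not prove this theorem from scratch: its entire proof is a one-line citation to Gabrielov and Vorobjov's stratification theorem \cite[Theorem 2]{GV95} for the existence and complexity of the basic weak stratification, together with Theorem~\ref{th:ConnComp} for the connected-component count. Your proposal, by contrast, attempts to reconstruct the stratification argument itself, which is considerably more ambitious.

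The overall architecture of your sketch --- reduce to Pfaffian, peel off the smooth top-dimensional part via Jacobian rank, recurse on the remainder --- is the right shape, and is indeed roughly how the Gabrielov--Vorobjov argument is organized. But there is a genuine gap at the heart of the recursion. You define $\operatorname{Sing}(V)$ as the locus where the Jacobian of the \emph{given} defining functions $f_{i_1},\dots,f_{i_a}$ has rank $\neq d-e$, and assert that $\dim\operatorname{Sing}(V) < e$. This fails in general: take $V=Z\bigl((x_1^2+x_2^2-1)^2\bigr)\subset\RR^2$, which is the unit circle, so $e=1$ and $d-e=1$, but the gradient of the single defining function vanishes identically on $V$, so your $\operatorname{Sing}(V)=V$ and the recursion stalls with $V^\circ=\emptyset$. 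More generally, whenever the presentation is not ``reduced'', or when $a<d-e$ (which can happen over $\RR$ via sums of squares, e.g.\ $Z(x_1^2+x_2^2)\subset\RR^2$), the Jacobian never attains rank $d-e$ and the top stratum is empty. The actual Gabrielov--Vorobjov construction gets around this by a more careful choice of auxiliary functions and a subtler induction; securing the dimension drop with controlled format is precisely the nontrivial content of their theorem, not something that falls out of the combinatorial skeleton.

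Your ``Main obstacle'' paragraph correctly flags that closures and frontiers require separate Pfaffian bookkeeping, but it misses this more basic issue: guaranteeing that the smooth locus is nonempty (equivalently, that the singular locus genuinely drops in dimension) for an \emph{arbitrary} Pfaffian presentation of $V$.
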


\begin{proof}
Immediate by combining the stratification result in \cite[Theorem 2]{GV95} by Gabrielov and Vorobjov and Theorem \ref{th:ConnComp} above.
\end{proof}

Given a semi-Pfaffian set $W\subset (-\mu, \mu)^d$, where $\mu > 0$, such that $W$ is defined by Pfaffian functions with maximum chain-degree $\alpha$, maximum degree $\beta$, and are all defined with respect to a common Pfaffian chain of length $r$, we will require a finer partition of $W$. 
In particular, we partition $W$ into \emph{cells}, defined as follows.
A $k$-dimensional cell with $k < d$ is the graph of a smooth map $f = (f_1,\ldots , f_{d-k})$ defined on an open set in $(-\mu, \mu)^k$. 
The functions $f_1,\ldots , f_{d-k}$ are Pfaffian of order $r$ and complexity $O_{\alpha,\beta,r,d}(1)$.
A $d$-dimensional cell is an open set.
All cells are \emph{basic} strata, as defined above. 
Theorem \ref{th:Components} can be used to obtain the following result.
For more details, see \cite[Lemma 3.1]{LNV24}.

\begin{theorem} \label{th:arcs}
For $\mu > 0$, let $W \subseteq (-\mu, \mu)^d$ be a semi-Pfaffian set of format $(\alpha,\beta,r)$.
Then there exists a partition of $W$ into cells of order $r$. The number of strata, the number of connected components of each stratum, and the complexity of each stratum is 
$O_{\alpha, \beta,r,d}(1)$.
\end{theorem}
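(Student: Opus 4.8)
\textbf{Proof plan for Theorem \ref{th:arcs}.}
The plan is to deduce this cell decomposition directly from the basic weak stratification guaranteed by Theorem \ref{th:Components}, upgrading each basic stratum into a bounded union of cells of the required graph form. First I would apply Theorem \ref{th:Components} to $W$, obtaining a partition into finitely many basic strata $S_1,\ldots,S_N$, with $N = O_{\alpha,\beta,r,d}(1)$, where each $S_i$ is a smooth semi-Pfaffian set of dimension $k_i$, with $O_{\alpha,\beta,r,d}(1)$ connected components and complexity $O_{\alpha,\beta,r,d}(1)$. It suffices to further decompose each such basic stratum $S$ into a bounded number of cells, since a bounded refinement of a bounded partition is still bounded, and the order of the defining functions never exceeds $r$ (the common chain $\vec q$ is fixed throughout).

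The key step is the local-to-global passage for a single basic $k$-stratum $S$. Since $S$ is basic, there are $d-k$ Pfaffian functions $g_1,\ldots,g_{d-k}$ vanishing on $S$ whose gradients are linearly independent at every point of $S$. By the implicit function theorem, in a neighbourhood of each point $p\in S$ one can solve for $d-k$ of the coordinates as smooth functions of the remaining $k$; choosing which coordinates play the role of the "base" $(-\mu,\mu)^k$ is a choice among the $\binom{d}{k}$ coordinate subspaces, and locally $S$ is the graph of a smooth map over an open subset of that base. The point is that these implicitly defined solution functions are again Pfaffian with respect to the same chain $\vec q$: this is a standard closure property of the Pfaffian class under the implicit function theorem (one adds the solution branches to the chain, increasing the format but not in a way that depends on anything beyond $\alpha,\beta,r,d$), so the order stays $r$ and the complexity stays $O_{\alpha,\beta,r,d}(1)$. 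To globalize, I would stratify $S$ according to which of the $\binom{d}{k}$ coordinate projections is a local diffeomorphism — this partitions $S$ into at most $\binom{d}{k}$ semi-Pfaffian pieces, each of bounded complexity — and then on each piece the projection to the chosen base is a local diffeomorphism everywhere, so each connected component maps to an open set and is the graph of a single Pfaffian map. Counting connected components of these pieces via Theorem \ref{th:ConnComp} (equivalently, via the component bound already built into Theorem \ref{th:Components}) keeps the total number of resulting cells at $O_{\alpha,\beta,r,d}(1)$. The $d$-dimensional strata are already open sets, hence $d$-dimensional cells, with nothing to do.

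The main obstacle I expect is making the "graph over an open subset of $(-\mu,\mu)^k$" claim hold globally rather than just locally: a connected component on which one fixed coordinate projection is everywhere a local diffeomorphism need not be a graph over that base unless the projection is also injective on the component. This is where one must either invoke a cell-decomposition theorem for the Pfaffian (o-minimal) structure directly, or argue that each such component can be cut into boundedly many sub-pieces on which the projection is injective — using, for instance, that the fibers of the projection restricted to $\overline S$ are semi-Pfaffian of bounded complexity and hence have $O_{\alpha,\beta,r,d}(1)$ points, so one can separate the sheets by boundedly many additional Pfaffian inequalities. The bookkeeping that the added functions keep the order at exactly $r$ and the complexity under an $(\alpha,\beta,r,d)$-bound is routine but needs care; this is precisely the content deferred to \cite[Lemma 3.1]{LNV24}, and I would cite that for the details rather than reproduce them.
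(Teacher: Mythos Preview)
Your proposal matches the paper's approach exactly: the paper does not prove this theorem in any detail but simply says that it follows from Theorem \ref{th:Components} and refers to \cite[Lemma 3.1]{LNV24} for the argument, which is precisely where you land as well. One small slip worth noting: you write that after ``adding the solution branches to the chain'' the ``order stays $r$,'' but adding functions to the chain is by definition an increase in order; the claim that the resulting cells still have order $r$ is a genuine feature of the construction in \cite{LNV24} (one works within the original chain rather than extending it), so you should not describe it as coming from enlarging the chain.
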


Currently, no precise quantitative bounds are known for the number of irreducible components of a Pfaffian set. 
We rely on Theorem \ref{th:arcs} to get a coarse bound. 

\begin{corollary} \label{co:irreducible}
Let $\mu > 0$, let $W \subseteq (-\mu, \mu)^d$ be a semi-Pfaffian set of format $(\alpha,\beta,r)$.
Then the number of irreducible components of $W$ is $O_{\alpha, \beta,r,d}(1)$.
\end{corollary}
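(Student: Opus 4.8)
The plan is to derive the bound on the number of irreducible components of $W$ from the cell decomposition of Theorem \ref{th:arcs}, using the fact that each irreducible component of $W$ is already "seen" by the cell decomposition in the sense that it cannot be split across cells in an uncontrolled way. More precisely, first I would apply Theorem \ref{th:arcs} to partition $W$ into a family of cells $C_1,\ldots,C_N$ with $N = O_{\alpha,\beta,r,d}(1)$, each cell being a connected basic stratum of controlled complexity. Then I would argue that every irreducible component $V$ of $W$ contains a cell of the same dimension as $V$: indeed, $V$ is a Pfaffian set, hence in particular a semi-Pfaffian set, so the partition restricts to a cell decomposition of $V$, and the top-dimensional part of $V$ must be covered by finitely many cells of dimension $\dim V$.

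\textbf{Key steps, in order.} (1) Invoke Theorem \ref{th:arcs} to get the cell partition $W = C_1 \sqcup \cdots \sqcup C_N$, $N = O_{\alpha,\beta,r,d}(1)$. (2) Observe that if $V$ is an irreducible component of $W$ with $\dim V = k$, then the closure of any single cell $C_i$ of dimension $k$ that meets the smooth $k$-dimensional locus of $V$ must be contained in $V$: a cell is connected and smooth, so its closure is irreducible, and an irreducible Pfaffian set meeting the relative interior of $V$ and contained in $W$ cannot meet any other component of $W$ in a $k$-dimensional set without contradicting either irreducibility or the dimension of that other component. (3) Conclude that the map sending each irreducible component $V$ to (say) the lowest-indexed cell $C_i$ with $\dim C_i = \dim V$ and $\overline{C_i} \subseteq V$ is well-defined and injective, since distinct components are not contained in one another and hence cannot share such a cell. (4) Therefore the number of irreducible components is at most $N = O_{\alpha,\beta,r,d}(1)$.

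\textbf{The main obstacle} I expect is step (2)--(3): making rigorous the claim that the cell decomposition "respects" the decomposition into irreducible components, i.e., that each top-dimensional cell lies inside a unique irreducible component. The subtlety is that cells are defined for the ambient set $W$, not for each component separately, and in the Pfaffian category we lack the clean algebraic machinery (primary decomposition, generic smoothness of varieties) that would make this automatic in the algebraic case. I would handle it by working with the Zariski closures: replace $W$ by $\overline{W}$ (whose irreducible components are the same, by definition of irreducibility for semi-Pfaffian sets), stratify $\overline{W}$ by smooth strata via Theorem \ref{th:Components}, and use that each connected smooth stratum of top dimension in a given component lies in the smooth locus of that component, hence in exactly one component. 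An alternative, if the cell-to-component argument proves delicate, is to bound the components directly: each irreducible component, being a nonempty Pfaffian set contained in $W$, has at least one connected component of its own, and one shows two distinct irreducible components of $W$ cannot have all their connected components "absorbed" into the same connected components of $W$ — but this requires care when $W$ itself is connected, so the cell/stratum-dimension bookkeeping seems the safer route.
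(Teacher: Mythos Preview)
Your proposal is correct and follows essentially the same approach as the paper: apply Theorem \ref{th:arcs} to stratify $W$, then argue that each $k$-dimensional irreducible component $U$ contains an entire connected $k$-dimensional stratum piece, so the number of components is bounded by the number of such pieces. The paper resolves your anticipated obstacle in step (2) cleanly via the identity theorem for real-analytic functions: writing $U = Z(f)$, any $k$-dimensional stratum $S$ meeting $U$ in a $k$-dimensional set contains a $k$-ball on which $f$ vanishes, and since $S$ is connected and smooth, analytic continuation forces $f|_S \equiv 0$, i.e.\ $S \subseteq U$.
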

\begin{proof}
We partition $W$ into strata as described in Theorem \ref{th:arcs}. Consider an irreducible component $U$ of $W$ such that $(-\mu, \mu)^d \cap U$ is $k$-dimensional. By definition, there is a connected component $S$ of a $k$-dimensional stratum such that $S \cap (-\mu, \mu)^d \cap U$ is $k$-dimensional. We will now show that $S \subseteq U$, which in turn implies that the number of irreducible components of $W$ is at most the sum total number of connected components of the strata of $W$. Theorem \ref{th:arcs} states that this number is $O_{\alpha, \beta, r, d}(1)$.

Let $U$ be defined by a Pfaffian function $f$. By the definition of dimension, there exists a $k$-dimensional ball $B$ that is contained in $S$. That is, $f$ is identically zero on $B$. By the identity theorem (see for instance \cite[Section 1.2]{KP02}), $f$ vanishes on all of $S$. Thus $S \subseteq U$. This comples the proof.
\end{proof}

Finally, we need the following Bezout-type result for Pfaffian curves in $\RR^2$.

\begin{lemma} \label{le:pfBez}
Let $Q$ be a Pfaffian chain of order $r$ and chain-degree $\alpha$. 
Consider nonzero $f_1,f_2 \in \Pf_2(\beta,Q)$ of format $(\alpha,\beta,r)$. 
Then, either $Z(f_1)$ and $Z(f_2)$ have a common one-dimensional component or 
$|Z(f_1)\cap Z(f_2)| = O_{\alpha,\beta,r}(1)$.
\end{lemma}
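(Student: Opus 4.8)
The plan is to reduce the Pfaffian Bézout statement to the known bound on connected components of Pfaffian sets, Theorem \ref{th:ConnComp}, by a perturbation/dimension argument. Suppose $Z(f_1)$ and $Z(f_2)$ do not share a one-dimensional component. The intersection $W = Z(f_1) \cap Z(f_2) = Z(f_1^2 + f_2^2)$ is itself a Pfaffian set of controlled format, so by Theorem \ref{th:ConnComp} it has $O_{\alpha,\beta,r}(1)$ connected components. The only remaining worry is that some connected component of $W$ could be a positive-dimensional arc even though $f_1, f_2$ have no common one-dimensional component. First I would rule this out: if a connected component $C$ of $W$ were one-dimensional, then $C$ would lie in $Z(f_1)$ and in $Z(f_2)$; since $f_1$ is not identically zero, $Z(f_1)$ has only finitely many one-dimensional irreducible components (Corollary \ref{co:irreducible}, applied on a bounded box, together with a covering of $\RR^2$ by such boxes), and $C$ would have to be contained in one of them, say $\gamma$. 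But then $f_2$ vanishes on the one-dimensional set $C \subseteq \gamma$, and by the identity theorem $f_2$ vanishes on all of $\gamma$ — making $\gamma$ a common one-dimensional component, contrary to assumption. Hence every connected component of $W$ is zero-dimensional, i.e.\ a point, and there are $O_{\alpha,\beta,r}(1)$ of them.

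To make the boundedness hypothesis of Theorem \ref{th:ConnComp} and Corollary \ref{co:irreducible} unnecessary — those are stated for sets inside a box $(-\mu,\mu)^d$ — I would first observe that the component bound of Theorem \ref{th:ConnComp} as stated is a global bound for $Z(f_1)\cap\cdots\cap Z(f_t)$ with no boundedness assumption, so for the counting of intersection points no localization is needed. For the auxiliary claim that $Z(f_1)$ has finitely many one-dimensional components, I would cover $\RR^2$ by finitely many (in fact a countable but locally finite family of) open boxes; on each box the number of irreducible components is uniformly bounded by Corollary \ref{co:irreducible}, and a global one-dimensional component of $Z(f_1)$ restricts to components on these boxes. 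Since every irreducible component is connected, and connectedness together with the local uniform bound forces a global bound: if there were infinitely many one-dimensional components, some box would meet infinitely many of them, a contradiction. Alternatively, and more cleanly, one can invoke that $Z(f_1)$ itself has $O_{\alpha,\beta,r}(1)$ connected components by Theorem \ref{th:ConnComp}, and each connected component of a Pfaffian curve contains finitely many irreducible components by the same local argument, so the total number of one-dimensional irreducible components of $Z(f_1)$ is $O_{\alpha,\beta,r}(1)$.

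The main obstacle I anticipate is the subtlety in the definition of ``one-dimensional component'': the statement speaks of a common \emph{one-dimensional component}, which I interpret as a common one-dimensional irreducible component of the two Pfaffian curves. I must be careful that a connected component of $W = Z(f_1) \cap Z(f_2)$ that happens to be a single point sitting on a shared arc does not occur in the ``no common component'' case — but that is exactly what the identity-theorem argument above handles. A second, more technical point is ensuring that $f_1^2 + f_2^2$ (or the pair $f_1, f_2$) genuinely has format controlled by $(\alpha,\beta,r)$: the sum of squares has degree $2\beta$ in the same chain $Q$, so its format is $(\alpha, 2\beta, r)$, which is still $O_{\alpha,\beta,r}(1)$ and suffices. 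I would also double-check that ``component'' in Lemma \ref{le:pfBez} need not be irreducible for the conclusion to go through — but since any one-dimensional connected subset of the intersection contains a one-dimensional irreducible component of each $Z(f_i)$, the argument is unaffected.

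In summary, the proof structure is: (i) form $W = Z(f_1)\cap Z(f_2)$, a Pfaffian set of bounded format; (ii) apply Theorem \ref{th:ConnComp} to bound the number of connected components of $W$ by $O_{\alpha,\beta,r}(1)$; (iii) show via Corollary \ref{co:irreducible} and the identity theorem that, absent a common one-dimensional component, no connected component of $W$ can be one-dimensional, so each is a point; (iv) conclude $|Z(f_1)\cap Z(f_2)| = O_{\alpha,\beta,r}(1)$. Step (iii) is where essentially all the content lies, and it mirrors the argument already used in the proof of Corollary \ref{co:irreducible}.
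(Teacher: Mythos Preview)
Your proposal is correct and follows essentially the same route as the paper: both reduce to the connected-component/irreducible-component bound for the Pfaffian set $Z(f_1)\cap Z(f_2)$, after arguing that the intersection is zero-dimensional when there is no common one-dimensional component. The paper's proof is terser---it simply asserts that $Z(f_1)\cap Z(f_2)$ is zero-dimensional and then invokes Corollary~\ref{co:irreducible}---whereas you explicitly justify the zero-dimensionality via the identity theorem (mirroring the proof of Corollary~\ref{co:irreducible}) and invoke Theorem~\ref{th:ConnComp} directly; the substance is the same.
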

\begin{proof}
We first consider the case where $Z(f_1)$ is zero-dimensional. 
In this case, Corollary \ref{co:irreducible} implies that 
\[ |Z(f_1)\cap Z(f_2)| \le |Z(f_1)| = O_{\alpha,\beta,r}(1), \]
as asserted.
A symmetric argument holds when $Z(f_2)$ is zero-dimensional.  
The only Pfaffian function that vanishes on the entire space is $0$.
Since $f_1$ and $f_2$ are nonzero, it remains to consider the case where both $Z(f_1)$ and $Z(f_2)$ are one-dimensional. 

We may assume that $Z(f_1)$ and $Z(f_2)$ do not share a common one-dimensional component, since otherwise we are done.
Then $Z(f_1) \cap Z(f_2)$ is a zero-dimensional Pfaffian set of order $r$.
In this case, Corollary \ref{co:irreducible} implies that $|Z(f_1)\cap Z(f_2)| = O_{\alpha,\beta,r}(1)$.
\end{proof}

\section{Proof of Theorem \ref{th:twocurves}}
\label{sec:proofoftheorems}

In this section, we prove Theorem \ref{th:twocurves}. 
More precisely, the current section contains the combinatorial infrastructure of the proof of Theorem \ref{th:twocurves}.
Several algebraic claims are stated as lemmas whose proofs are postponed to Section \ref{sec:LemmaCij}.

Let $\pts$ be a set of points in $\RR^2$ and let $\curves$ be a set of curves in $\RR^2$.
A pair $(p,\gamma)\in \pts\times \curves$ is an \emph{incidence} if $p$ lies on $\gamma$.
Let $I(\pts,\curves)$ be the number of incidences in $\pts\times \curves$.
For positive integers $k$ and $t$, we say that $\pts\times \curves$ has $k$ \emph{degrees of freedom} with \emph{multiplicity} $t$ if 
\begin{itemize}[itemsep=1pt,topsep=1pt]
\item For any $k$ points in $\curves$, at most $t$ curves of $\curves$ are incident to all $k$ points. 
\item Every pair of curves from $\curves$ intersect in at most $t$ points. 
\end{itemize}
\vspace{1mm}

We recall that a curve is \emph{simple} if it does not intersect itself. 
The following result is by Pach and Sharir \cite{PS98}.

\begin{theorem} \label{th:PachSharir}
Let $\pts$ be a set of $m$ points and let $\curves$ be a set of $n$ simple curves. 
If $\pts \times \curves$ has $k$ degrees of freedom with multiplicity $t$, then
\[ I(\pts,\curves) = O_{k,t} \left(m^{k/(2k-1)}n^{(2k-2)/(2k-1)}+m+n\right). \]
\end{theorem}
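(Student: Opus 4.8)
The plan is to combine a weak ``Kővári--Sós--Turán''-type incidence bound, which uses only the degrees-of-freedom hypothesis, with a random-partitioning argument in the style of Clarkson and Shor that amplifies it into the claimed Szemerédi--Trotter-type estimate; all constants depend on $k$ and $t$. The first step is to show that for \emph{any} point set of size $m'$ and curve set of size $n'$ satisfying the hypotheses, $I = O_{k,t}(m'(n')^{1-1/k}+n')$. To see this, double-count pairs $(\gamma,S)$ with $\gamma\in\curves$ and $S$ a $k$-element subset of $\pts$ lying on $\gamma$. On one hand this equals $\sum_\gamma\binom{\ell_\gamma}{k}$, where $\ell_\gamma$ is the number of points on $\gamma$; since each $k$-set of points lies on at most $t$ curves, it is at most $t\binom{m'}{k}=O_{k,t}((m')^k)$. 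On the other hand, discarding the curves with fewer than $k$ points (which contribute $O_k(n')$ incidences) and applying convexity of $x\mapsto\binom{x}{k}$ to the rest, one gets $\sum_\gamma\binom{\ell_\gamma}{k}\ge c_k\,(I')^k/(n')^{k-1}$ once the average number of points per surviving curve is at least $2k$; otherwise that incidence count is below $2kn'$. Comparing the two estimates gives the weak bound.

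\textbf{Random cutting.} Fix a parameter $1\le r\le n/2$, to be optimized, and pick a uniformly random $r$-element subset $\mathcal R\subseteq\curves$. Since the curves are simple and any two meet in at most $t$ points, the planar arrangement $\mathcal A(\mathcal R)$ has at most $t\binom r2$ vertices, hence $O(tr^2)$ edges and faces, and any curve of $\curves$ is cut by its $\le tr$ crossings with $\mathcal R$ into $O(tr)$ arcs and so meets $O(tr)$ faces (add a large bounding box to handle unbounded curves). For a face $\Delta$ let $m_\Delta$ be the number of points of $\pts$ in the interior of $\Delta$ and $n_\Delta$ the number of curves of $\curves\setminus\mathcal R$ meeting the interior of $\Delta$; then $\sum_\Delta m_\Delta\le m$ and $\sum_\Delta n_\Delta=O(trn)$, deterministically. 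Applying the weak bound inside each face and summing gives $\sum_\Delta I_\Delta=O_{k,t}\bigl(\textstyle\sum_\Delta m_\Delta n_\Delta^{1-1/k}+trn\bigr)$, and the standard Clarkson--Shor moment estimate for random samples, applied to the configuration space of ``cells'' cut out by $O(1)$ of the curves, bounds $\mathbb E\bigl[\sum_\Delta m_\Delta n_\Delta^{1-1/k}\bigr]=O_k\bigl((n/r)^{1-1/k}m\bigr)$.

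\textbf{Accounting and optimization.} Every incidence $(p,\gamma)$ of $\pts\times\curves$ is of one of three types: (i) $\gamma\in\mathcal R$, and the expected number of these is $\tfrac rn\,I(\pts,\curves)$; (ii) $\gamma\notin\mathcal R$ and $p$ lies in the interior of its cell, counted in the previous step; (iii) $\gamma\notin\mathcal R$ and $p$ lies on some $\gamma_0\in\mathcal R$, and the number of these is at most $\sum_{\gamma_0\in\mathcal R}\sum_{\gamma\ne\gamma_0}|\gamma_0\cap\gamma\cap\pts|\le trn$. Hence $I(\pts,\curves)\le\tfrac rn\,I(\pts,\curves)+O_{k,t}\bigl((n/r)^{1-1/k}m+trn\bigr)$ in expectation, and since $r\le n/2$ the first term is absorbed: $I(\pts,\curves)=O_{k,t}\bigl((n/r)^{1-1/k}m+trn\bigr)$. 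Choosing $r$ to balance the two terms, namely $r\asymp(m/t)^{k/(2k-1)}n^{-1/(2k-1)}$, makes both equal to $\Theta_t\bigl(m^{k/(2k-1)}n^{(2k-2)/(2k-1)}\bigr)$. If this $r$ is below $1$ (equivalently $m\lesssim n^{1/k}$) take $r=1$ and the bound collapses to $O_{k,t}(n)$; if it exceeds $n/2$ (equivalently $m\gtrsim_t n^2$) take $r=n/2$ and it collapses to $O_{k,t}(m)$. Together these yield the asserted $O_{k,t}\bigl(m^{k/(2k-1)}n^{(2k-2)/(2k-1)}+m+n\bigr)$.

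\textbf{Main obstacle.} The delicate part is the Clarkson--Shor moment estimate of Step~2: one must confirm that arrangements of $r$ simple curves with at most $t$ pairwise intersections behave, for random-sampling purposes, like arrangements of lines --- in particular that the expected value of $\sum_\Delta m_\Delta n_\Delta^{1-1/k}$ over the cells of a random $r$-sample is $O((n/r)^{1-1/k}m)$. This needs the ``cell'' ranges to form a configuration space of bounded complexity, which is where the hypothesis that the curves are simple (so each, minus finitely many points, is a finite union of arcs) is actually used. Everything else --- the arrangement-complexity count, the bound on the number of cells a curve meets, and the crude bound on type-(iii) incidences --- follows directly from the two combinatorial hypotheses.
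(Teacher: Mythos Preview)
The paper does not prove this theorem; it quotes it from Pach and Sharir \cite{PS98} and invokes it as a black box inside Lemma~\ref{le:mainincidences}. So there is no in-paper argument to compare against, and the only question is whether your sketch stands on its own.

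Steps~1 and~3 are fine: the K\H{o}v\'ari--S\'os--Tur\'an count and the balancing arithmetic are both correct. The real issue is the one you yourself flag in Step~2, but your proposed resolution of it is wrong. The Clarkson--Shor moment bound $\mathbb{E}\bigl[\sum_\Delta m_\Delta n_\Delta^{1-1/k}\bigr]=O\bigl((n/r)^{1-1/k}m\bigr)$ requires each cell $\Delta$ to be determined by $O(1)$ of the sample curves, so that the probability a given cell appears and is crossed by $j$ non-sampled curves has the right tail. Faces of $\mathcal A(\mathcal R)$ do \emph{not} have bounded description complexity --- a single face may be bounded by all $r$ sample curves --- so one must first refine the arrangement, typically via a vertical decomposition. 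That refinement in turn needs every curve to have only $O(1)$ points of vertical tangency (equivalently, to split into $O(1)$ $x$-monotone pieces). Being \emph{simple}, i.e.\ non-self-intersecting, does not provide this; your parenthetical ``each, minus finitely many points, is a finite union of arcs'' is a non-sequitur, since a simple curve is already a single arc and that says nothing about monotonicity or tangencies.

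For the curves actually used in this paper the gap closes easily: Pfaffian curves of bounded format decompose into $O_{\alpha,\beta,r}(1)$ monotone arcs by Theorem~\ref{th:arcs}, so the vertical decomposition of $\mathcal A(\mathcal R)$ has $O_t(r^2)$ pseudo-trapezoids each determined by $O(1)$ arcs, and Clarkson--Shor goes through. But as a proof of the theorem \emph{as stated}, with only simplicity plus the two combinatorial hypotheses, Step~2 is incomplete; you would have to either import a tameness assumption of this kind (which the application supplies anyway) or follow the more careful combinatorial route of the original Pach--Sharir paper.
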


We now recall the statement of Theorem \ref{th:twocurves}.

\getkeytheorem{thtwocurves}

\begin{proof}
By possibly performing a rotation around the origin, we may assume that neither $\gamma_1$ nor $\gamma_2$ contains segments of vertical lines.
Such a rotation does not affect the number of distinct distances.

\parag{Pruning the point sets.} 
Let $\pts_1$ be a set of $m$ points on $\gamma_1$ and let $\pts_2$ be a set of $n$ points on $\gamma_2$. 
By possibly discarding points from $\pts_1$ and $\pts_2$, we may assume that these two sets are disjoint, that $|\pts_1|\ge m/2$, and that $|\pts_2|\ge n/2$.

We apply Theorem \ref{th:arcs} with a sufficiently large $\mu>0$, such that $\pts_1\subseteq (-\mu,\mu)^2$.
This implies that $\gamma_1$ can be partitioned into $O_{\alpha,\beta,r}(1)$ points and open arcs that are parametrized by Pfaffian functions of order $r$ and complexity $O_{\alpha,\beta,r}(1)$. 
By the pigeonhole principle, there exists an arc $\gamma_1'$ such that $|\gamma_1' \cap \pts_1| = \Theta_{\alpha,\beta,r}(m)$.
We note that $\gamma_1'$ is a graph, in the sense that every vertical line intersects $\gamma_1'$ at most once.
We perform a symmetric process for $\gamma_2$, to obtain $\gamma_2'$. 
As discussed in the proof of Corollary \ref{co:irreducible}, $\gamma_1'$ and $\gamma_2'$ are irreducible.

We repeat the argument in the preceding paragraph with the $x$- and $y$-axes switched. 
This leads to an arc $\gamma_1''\subseteq \gamma_1'$ that is a \emph{strictly monotone} arc and satisfies that $|\gamma_1'' \cap \pts_1| = \Theta_{\alpha,\beta,r}(m)$.
We perform a symmetric process for $\gamma_2$, to obtain a strictly monotone $\gamma_2'' \subseteq \gamma_2'$. 

If $\gamma_1''$ or $\gamma_2''$ is an arc of a  circle, then we remove the center of this circle from the other point set.
By assumption, if $\gamma_1''$ is a line segment, then $\gamma_2''$ is not a line segment parallel or orthogonal to $\gamma_1''$.
By Lemma \ref{le:pfBez}, $\gamma_2''$ intersects a line parallel or orthogonal to $\gamma_1''$ in $O_{\alpha,\beta,r}(1)$ points. 
Thus, by removing points from $\pts_2$, we may assume that each line parallel or orthogonal to $\gamma_1''$ contains at most one point of $\pts_2$ and that that $|\pts_2|=\Theta_{\alpha,\beta,r}(n)$.
Similarly, when $\gamma_1''$ is a circle, we may assume that any concentric circle contains at most one point of $\pts_2$. 
We perform a symmetric pruning of $\pts_1$ when $\gamma_2''$ is a line or a circle.

Abusing notation, we refer to the above $\gamma_1''$ as $\gamma_1$ and to $\gamma_1'' \cap \pts_1$ as $\pts_1$.
That is, $\gamma_1,\gamma_2$ are open arcs that are parameterized by Pfaffian functions of order $r$ and complexity $O_{\alpha,\beta,r}(1)$.
We also have that $|\pts_1|=\Theta_{\alpha,\beta,r}(m)$ and that $|\pts_2|=\Theta_{\alpha,\beta,r}(n)$.
With respect to the revised notation and pruned point sets, we write $m'=|\pts_1\cap \gamma_1|=\Theta_{\alpha,\beta,r}(m)$ and $n'=|\pts_2\cap \gamma_2|=\Theta_{\alpha,\beta,r}(n)$.

\parag{Distance energy and proximity.} 
We define the \emph{distance energy} of $\pts_1$ and $\pts_2$ as 
\[ E(\pts_1,\pts_2) = \left|\left\{(p,p',q,q')\in \pts_1^2\times \pts_2^2\ :\ |pq|=|p'q'| \right\}\right|.\]
Let $D$ be the set of distances between $\pts_1$ and $\pts_2$.
For $d\in D$, we set 
\[ r_d = \left|\{(p,q)\in \pts_1\times \pts_2\ :\ |pq| = d\}\right|. \] 
In other words, $r_d$ is the number of representations of $d$ as a distance in $\pts_1\times\pts_2$. 

Since every pair in $\pts_1\times\pts_2$ spans a distance, we get that $\sum_{d\in D} r_d = m'n'$.
The Cauchy-Schwarz inequality implies that
\begin{equation} \label{eq:EnergyLower} E(\pts_1,\pts_2) = \sum_{d\in D} r_d^2
  \geq \frac{1}{|D|} \left( \sum_{d\in D} r_d   \right)^2 = \frac{(m'n')^2}{|D|}. 
\end{equation}

We write $\pts_1 = \{p_1,\ldots,p_{m'}\}$ and $\pts_2 = \{q_1,\ldots,q_{n'}\}$, both ordered in increasing order of $x$-coordinates.
Since $\gamma_1$ and $\gamma_2$ are strictly monotone, the sequence of $y$-coordinates in each set is either strictly increasing or strictly decreasing.

For a parameter $0<c<1$ that is set below, we define
\emph{distance energy with proximity} $c$ as 
\[ E_c(\pts_1,\pts_2) = \left|\left\{(p_i,p_j,q_{i'},q_{j'})\in \pts_1^2\times \pts_2^2\ :\ |pq|=|p'q'|,\ |i-j|\le cm',\ |i'-j'|\le cn' \right\}\right|.\]

By combining \eqref{eq:EnergyLower} with a simple counting argument from \cite[Section 2]{SolyZahl24}, we have that
\begin{equation} \label{eq:EnergyLowerc} E_c(\pts_1,\pts_2) =\Omega(c \cdot E(\pts_1,\pts_2)) = \Omega\left(c\cdot \frac{(m'n')^2}{|D|}\right). 
\end{equation}

With \eqref{eq:EnergyLowerc} in mind, it remains to derive an upper bound for $E_c(\pts_1,\pts_2)$.

\parag{Point-curve incidences.}
For $1\le i,j\le m'$, we define
\begin{equation} \label{eq:Cij} 
C_{i,j} = \{(q,q')\in \gamma_2^2 \subset\RR^4\ :\ |p_i q| = |p_jq'| \}. 
\end{equation}
Denoting the coordinates of $\RR^4$ as $x_1,x_2,x_3,x_4$, we have that
$C_{i,j}$ is a connected open subset of the set of points $(q,q')\in \RR^4$ that satisfy
\begin{align*}
 (x_1-p_{i,x})^2 + (x_2-p_{i,y})^2 &= (x_3-p_{j,x})^2 + (x_4-p_{j,y})^2, \nonumber \\[2mm]
  f_2(x_1,x_2) &= f_2(x_3,x_4)=0. 
  \end{align*}

The above equations imply that $C_{i,j}$ is Pfaffian of order $r$ and complexity $O_{\alpha,\beta,r}(1)$.
We refer to $C_{i,j}$ as a curve, since below we show that it is one-dimensional. 
We set 
\[ \curves = \{ C_{i,j}\ :\ 1\leq i,j \leq m' \text{ and } |i-j|\le m'c\}.\]
In theory, $\curves$ may be a multi-set, containing identical elements.
We note that $|\curves| =\Theta(c\cdot (m')^2)$. 

We set $\pts_c = \{(q_i,q_j)\in \pts_2^2\ :\ |i-j|\le cn'\}$.
By the above, a point $(q_{i'},q_{j'})\in \pts_c$ is incident to a curve $C_{i,j}$ if and only if $|p_i q_{i'}| = |p_jq_{j'}|$.
In other words: $E_c(\pts_1,\pts_2)= I(\pts_c,\curves)$. 
Thus, it remains to obtain an upper bound for $I(\pts_c,\curves)$.

To obtain a good upper bound on the number of incidences, we need to address the possibility of $\curves$ potentially containing identical curves.
As a first step, the following lemma collects properties of the intersections of the curves $C_{i,j}$. 
We prove it in Section \ref{sec:LemmaCij}.

\begin{lemma}[store=leCijInt]\label{le:CijInt}
\begin{enumerate}[label=(\alph*)]
\item \label{cij:curve} Each $C_{i,j} \in \curves$ is of dimension at most one.
\item \label{cij:no-three-infinite} There exists a subset $\Gamma_0\subset \Gamma$ such that  $|\Gamma_0|=O_{\alpha,\beta, r}(m')$ and no three curves in $\Gamma\backslash \Gamma_0$ have an infinite intersection.
\item\label{cij:finite-small} If two curves $C_{i,j},C_{k,\ell}\in \Gamma$ have a finite intersection then $|C_{i,j}\cap C_{k,l}| = O_{\alpha, \beta, r}(1)$. 
\end{enumerate}
\end{lemma}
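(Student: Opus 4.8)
\medskip

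The plan is to prove the three parts in sequence, using the Pfaffian-Bezout result (Lemma~\ref{le:pfBez}), the bound on irreducible components (Corollary~\ref{co:irreducible}), and the explicit description of $C_{i,j}$ as a connected open component of a Pfaffian set of bounded complexity.

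For part~\ref{cij:curve}, I would argue that $C_{i,j}$ cannot be two-dimensional. Since $\gamma_2$ is a one-dimensional arc, the set $\{(q,q')\in\gamma_2^2\}$ already lives in a two-dimensional locus in $\RR^4$; imposing the additional equation $|p_iq|=|p_jq'|$ should cut the dimension down by one generically. The clean way to see this: parametrize $\gamma_2$ by a Pfaffian function $t\mapsto \varphi(t)$ on an interval, so $C_{i,j}$ pulls back to the zero set in the $(s,t)$-plane of the single Pfaffian function $g(s,t)=|p_i\,\varphi(s)|^2-|p_j\,\varphi(t)|^2$. If this zero set were two-dimensional, then $g\equiv 0$, which would force $|p_i\,\varphi(s)|$ to be constant in $s$ — i.e.\ $\gamma_2$ lies on a circle centered at $p_i$. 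But in the pruning stage we discarded the center of any such circle and, more to the point, a strictly monotone arc of positive length cannot be a full circle; so $g\not\equiv0$ and $C_{i,j}$ has dimension at most one.

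For part~\ref{cij:finite-small}: $C_{i,j}$ and $C_{k,\ell}$ are each contained in a Pfaffian set of order $r$ and complexity $O_{\alpha,\beta,r}(1)$, hence so is their intersection, which is Pfaffian of bounded complexity. If this intersection is finite (zero-dimensional), Corollary~\ref{co:irreducible} (or directly Theorem~\ref{th:arcs}) bounds its number of connected components — i.e.\ its cardinality — by $O_{\alpha,\beta,r}(1)$. Equivalently, working in the $(s,t,s',t')$-parameter space, the intersection is cut out by a bounded-complexity Pfaffian system whose solution set, if $0$-dimensional, has $O_{\alpha,\beta,r}(1)$ points by the connected-components bound. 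So this part is essentially immediate once part~\ref{cij:curve} is in hand.

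Part~\ref{cij:no-three-infinite} is the main obstacle. Two curves $C_{i,j}$ and $C_{k,\ell}$ have an infinite intersection exactly when they share a one-dimensional irreducible component; since each $C_{i,j}$ is an irreducible (or bounded-complexity, hence boundedly-many-component) one-dimensional Pfaffian set, an infinite intersection of three of them forces all three to contain a common one-dimensional component. I would set up an auxiliary incidence/counting argument: consider the pairs $(i,j)$ with $|i-j|\le m'c$ grouped by which one-dimensional component they carry. The key is to show that any fixed irreducible one-dimensional Pfaffian curve of bounded complexity is a component of only $O_{\alpha,\beta,r}(1)$ of the $C_{i,j}$, \emph{or} that the indices $(i,j)$ for which $C_{i,j}$ shares a component with some other $C_{k,\ell}$ are few — at most $O_{\alpha,\beta,r}(m')$ of them. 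The geometric input is that the component equation ties $p_i$ and $p_j$ together: if $C_{i,j}$ and $C_{i,j'}$ (same $i$, different $j$) shared a one-dimensional component, then along that component both $|p_iq|=|p_jq'|$ and $|p_iq|=|p_{j'}q'|$ hold on an infinite set, forcing $|p_jq'|=|p_{j'}q'|$ on an infinite subset of $\gamma_2$, which by the dimension argument of part~\ref{cij:curve} means $\gamma_2$ lies on the perpendicular bisector of $p_jp_{j'}$ (a line) — contradicting that $\gamma_2$ is not a line, or handled by the line case in the pruning. A symmetric statement holds fixing $j$ and varying $i$. One then shows that the "bad" curves — those sharing a component with a non-parallel partner — can be organized along a bounded number of such degenerate families, each contributing $O_{\alpha,\beta,r}(m')$ indices, so collecting them into $\Gamma_0$ with $|\Gamma_0|=O_{\alpha,\beta,r}(m')$ leaves no three curves in $\Gamma\setminus\Gamma_0$ with infinite common intersection. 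Making the bookkeeping of these degenerate families precise — in particular ruling out the parallel/orthogonal/concentric exceptional configurations, which is where the hypothesis on $\gamma_1,\gamma_2$ is used — is the delicate part and is exactly what I expect to be postponed to Section~\ref{sec:LemmaCij}.
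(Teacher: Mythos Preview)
Your arguments for parts~\ref{cij:curve} and~\ref{cij:finite-small} are essentially the paper's own: pull back to the plane via the Pfaffian parametrization of $\gamma_2$ and invoke the connected-components bound. The extra care you take in~\ref{cij:curve} to rule out $g\equiv 0$ (via the circle-arc exclusion) is a detail the paper leaves implicit, so that part is fine.

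Part~\ref{cij:no-three-infinite} is where your plan has a genuine gap. Your perpendicular-bisector observation is correct and shows that, for $\gamma_2$ not a line, no two curves $C_{i,j}$ and $C_{i,j'}$ with the same first index (or the same second index) share a one-dimensional component. But this only gives injectivity of $j\mapsto C_{i,j}$ for each fixed $i$: it does \emph{not} prevent a single irreducible component from lying in many curves $C_{i_1,j_1},C_{i_2,j_2},\ldots$ with all $i_k$ distinct and all $j_k$ distinct --- potentially $\Theta(m')$ of them. So the step ``the bad curves can be organized along a bounded number of degenerate families, each contributing $O_{\alpha,\beta,r}(m')$ indices'' does not follow from what you have set up, and there is no evident way to complete it along those lines.

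The paper's route is different and uses two ingredients you have not identified. First, it splits according to whether $|p_ip_k|=|p_jp_\ell|$. When equality holds and $C_{i,j}\cap C_{k,\ell}$ is infinite, one shows (Lemma~\ref{le:samedist}) that some isometry $T$ of $\RR^2$ with $T(p_i)=p_j$, $T(p_k)=p_\ell$ is a \emph{symmetry of $\gamma_2$}; a separate lemma (Lemma~\ref{lem:syms}) bounds the number of symmetries of a Pfaffian curve that is neither a line nor a circle by $O_{\alpha,\beta,r}(1)$. Taking $\Gamma_0$ to be all $C_{i,j}$ with $T(p_i)=p_j$ for some symmetry $T$ then gives $|\Gamma_0|=O_{\alpha,\beta,r}(m')$, and any two curves in $\Gamma\setminus\Gamma_0$ with infinite intersection must satisfy $|p_ip_k|\neq|p_jp_\ell|$. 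Second, for this unequal-distance case one needs a direct coordinate calculation (Lemma~\ref{lem:diffdist}, together with the conic and line cases from \cite{PachDZ17}) showing that three such curves cannot have an infinite common intersection. Neither the symmetry argument nor the unequal-distance calculation is hinted at in your plan, and both are essential.
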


Let $\curves_0$ be as defined in Lemma \ref{le:CijInt}-\ref{cij:no-three-infinite}.  
By Corollary \ref{co:irreducible} and Theorem \ref{th:ConnComp}, there exists $t'=O_{\alpha,\beta,r}(1)$, such that each curve in $\curves\setminus\curves_0$ has at most $t'$ irreducible connected components.
We apply Corollary \ref{co:irreducible} with a sufficiently large $\mu$, so that $(-\mu, \mu)^4$ contains all of the components. Some of these components may extend to infinity beyond $(-\mu, \mu)^4$.
We note that each such component is either a single point or an arc. 
We create sets $\curves_1,\ldots,\curves_{t'}$ that consist of the irreducible connected components of the elements of $\curves\setminus\curves_0$.
In particular, we break each curve of $\curves\setminus\curves_0$ into irreducible components, and place each component in a different $\curves_i$.

We consider a fixed set $\curves_i$.
By Lemma \ref{le:CijInt}, no three elements of $\curves_i$ have an infinite intersection. 
For two arcs to have an infinite intersection, they must belong to the same irreducible curve. 
We claim that we can partition $\curves_i$ into three subsets, such that every two elements from the same subset have a finite intersection. 
Indeed, consider a connected component $\gamma$ that includes overlapping arcs from $\curves_i$. 
We travel along $\gamma$ starting at an arbitrary point. 
Each time we encounter the beginning of a new arc $a\in \curves_i$, we are inside at most one other arc $a'\in \curves_i$. 
We insert $a$ to a different subset of $\curves_i$ than that of $a'$.
The third subset is required only when $\gamma$ is a closed curve, and only for the case where the last arc we visit overlaps with an arc that contained our starting point. 
We repeat the above process for every relevant curve $\gamma$. 

By the preceding paragraph, we partitioned the curves of $\curves\setminus\curves_0$ into $O_{\alpha,\beta,r}(c\cdot(m')^2)$ points and arcs.
These points and arcs are grouped into $t\le 3t' = O_{\alpha,\beta,r}(1)$ sets $\curves_1,\ldots,\curves_t$, such that no two elements from the same set have an infinite intersection. 
We note that 
\[ I(\pts_c,\curves) = \sum_{i=0}^{t}I(\pts_c,\curves_i). \]
 
\parag{Simplifying the incidence problems.} 
For $0 < i \le t$, we wish to use Theorem \ref{th:PachSharir} to obtain a bound for $I(\pts_c,\curves_i)$.
To be able to do that, we now reduce the problem to an incidence problem in $\RR^2$ with two degrees of freedom.

We reverse the roles of $\gamma_1$ and $\gamma_2$ in the above analysis, as follows.
Symmetrically to \eqref{eq:Cij}, we set  
\[ \widetilde{C}_{i,j} = \{(p_{i'},p_{j'})\in \gamma_1^2\ :\ |q_i p_{i'}| = |q_jp_{j'}|\} \subset \RR^4. \]
In other words, $\widetilde{C}_{i,j}$ is a connected open subset of the points $(x_1,x_2, x_3,x_4)$ satisfying
 \begin{align*}
 (q_{i,x}-x_1)^2 + (q_{i,y}-x_2)^2 &= (q_{j,x}-x_3)^2 + (q_{j,y}-x_4)^2, \\[2mm]
  f_1(x_1,x_2)=0,&~~~f_1(x_3,x_4)=0.
 \end{align*}
 
We set $\widetilde{\curves} = \{ \widetilde{C}_{i,j}\ :\ 1\leq i,j \leq n' \text{ and } |i-j|\le c n'\}$.
By the statement symmetric to Lemma \ref{le:CijInt}-\ref{cij:no-three-infinite},
 there is a subset $\widetilde{\curves}_0\subset \widetilde{\curves}$ of size $O_{\alpha,\beta,r}(n')$, such that no three curves of $\widetilde{\curves}\setminus \widetilde{\curves}_0$ have an infinite intersection.
Let $\pts_0$ be the set of points $(q_i,q_j)\in \pts_2^2$ 
that correspond to the curves of $\widetilde{\curves}_0$.
 
By repeating the above argument for partitioning $\curves\setminus \curves_0$ into $t$ sets of points and arcs, we obtain a partition of $\pts\backslash \pts_0$ into subsets $\pts_1,\ldots, \pts_s$, so that two points from the same $\pts_j$ correspond to curves with a finite intersection.
Here, $s=O_{\alpha,\beta,r}(1)$.

By the analog of Lemma \ref{le:CijInt}-\ref{cij:finite-small}, for any two points from the same $\pts_j$, the number of curves from $\curves$ that are incident to both is $O_{\alpha,\beta,r}(1)$.  
We conclude that, for all $1\le i \le t$ and $1\le j\le s$,  the configuration $\pts_i\times\curves_j$ has two degrees of freedom with multiplicity $O_{\alpha,\beta,r}(1)$.

To recap, when ignoring $\pts_0$ and $\curves_0$, we have $st$ incidence problems, each with two degree of freedom. 
However, these incidence problems are in $\RR^4$. 
Currently, we do not have a reasonable incidence bound for Pfaffian curves in $\RR^4$.
Also, as mentioned earlier, projections of Pfaffian sets are not well-behaved.
Fortunately, in this specific case, we can perform a different operation that is equivalent to a projection. 

\begin{lemma}\label{le:mainincidences}
For any $1\le i \le t$ and $1\le j\le s$, we have that
\[ I(\pts_i,\curves_j) = O_{\alpha,\beta,r}\left(c^{4/3}(m')^{4/3}(n')^{4/3} + c\cdot(m')^2 + c\cdot (n')^2\right).\]
\end{lemma}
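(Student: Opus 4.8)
The plan is to reduce each four-dimensional incidence problem $I(\pts_i,\curves_j)$ to a planar incidence problem in $\RR^2$ to which Theorem \ref{th:PachSharir} applies with $k=2$. The key observation is that, although a point of $\pts_i\subset\gamma_2^2\subset\RR^4$ is genuinely a pair $(q_{a},q_{b})$, each of $q_a,q_b$ lies on the fixed strictly monotone arc $\gamma_2$, and $\gamma_2$ is the graph of a single Pfaffian function on an interval of the $x$-axis. Thus a point $(q_a,q_b)\in\gamma_2^2$ is determined by the pair $(q_{a,x},q_{b,x})$ of $x$-coordinates, giving a bijection $\Psi:\gamma_2^2\to J\times J\subset\RR^2$ where $J$ is the $x$-projection of $\gamma_2$. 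This is the "operation equivalent to a projection" alluded to before the lemma: it is not a coordinate projection of the ambient $\RR^4$, but a concrete re-parametrization that is available precisely because $\gamma_2$ is a Pfaffian graph. I would first record that $\Psi$ and $\Psi^{-1}$ are given by Pfaffian functions of order $r$ and complexity $O_{\alpha,\beta,r}(1)$ — $\Psi^{-1}$ uses the parametrization of $\gamma_2$ supplied by Theorem \ref{th:arcs}, and hence so is the image $\Psi(C_{i,j})$ of any curve in $\curves_j$.

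Next I would set $\pts_i^* = \Psi(\pts_i)\subset\RR^2$ and $\curves_j^* = \{\Psi(C)\ :\ C\in\curves_j\}$. Since $\Psi$ is a bijection on $\gamma_2^2$, incidences are preserved: $I(\pts_i,\curves_j)=I(\pts_i^*,\curves_j^*)$. Each $\Psi(C_{i,j})$ is a one-dimensional semi-Pfaffian set in $\RR^2$ of bounded complexity (by Lemma \ref{le:CijInt}\ref{cij:curve} and the fact that $\Psi$ is a bounded-complexity Pfaffian map), hence — applying Theorem \ref{th:arcs} once more in $\RR^2$ and discarding a bounded number of pieces per curve into the lower-order terms — I may assume each element of $\curves_j^*$ is a simple arc, as required by Theorem \ref{th:PachSharir}. (The finitely many isolated points that may arise contribute only $O(1)$ incidences each, absorbed into the $m+n$ term.)

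The degrees-of-freedom input is exactly what the text has already assembled: for two points in the same $\pts_i$, the number of curves of $\curves$ through both is $O_{\alpha,\beta,r}(1)$ (the analog of Lemma \ref{le:CijInt}\ref{cij:finite-small}), and this property is transported verbatim through the bijection $\Psi$; likewise two curves from the same $\curves_j$ meet in a finite, hence $O_{\alpha,\beta,r}(1)$, number of points by Lemma \ref{le:CijInt}\ref{cij:finite-small}. Therefore $\pts_i^*\times\curves_j^*$ has $2$ degrees of freedom with multiplicity $O_{\alpha,\beta,r}(1)$. Plugging $k=2$ into Theorem \ref{th:PachSharir} with $|\pts_i^*|=O((n')^2)$ — actually $|\pts_i^*|\le|\pts_c|=\Theta(c\,(n')^2)$ — and $|\curves_j^*|\le|\curves|=\Theta(c\,(m')^2)$ yields
\[
I(\pts_i,\curves_j)=O_{\alpha,\beta,r}\!\left((c(n')^2)^{2/3}(c(m')^2)^{1/3}+c(n')^2+c(m')^2\right)
 = O_{\alpha,\beta,r}\!\left(c\cdot (m')^{2/3}(n')^{4/3}+c(m')^2+c(n')^2\right).
\]
By the symmetric argument — reversing the roles of $\gamma_1,\gamma_2$, i.e. using the curves $\widetilde C_{i,j}$ and projecting along the monotone graph $\gamma_1$ instead — one also gets $I(\pts_i,\curves_j)=O_{\alpha,\beta,r}(c\cdot (m')^{4/3}(n')^{2/3}+c(m')^2+c(n')^2)$. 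Multiplying the two bounds and taking square roots gives $I(\pts_i,\curves_j)=O_{\alpha,\beta,r}(c\cdot (m')(n') + c(m')^2+c(n')^2)$; more to the point, applying Theorem \ref{th:PachSharir} directly with the better of the two size bounds on each side — $|\curves_j^*|=O(c(m')^2)$ with the role of points played by the $\widetilde C$-configuration — yields the claimed $c^{4/3}(m')^{4/3}(n')^{4/3}+c(m')^2+c(n')^2$. I would present this last step carefully: the exponent $4/3$ in $c^{4/3}(m')^{4/3}(n')^{4/3}$ comes from feeding $m^{k/(2k-1)}n^{(2k-2)/(2k-1)}=M^{2/3}N^{1/3}$ with $M=\Theta(c(n')^2)$, $N=\Theta(c(m')^2)$ and then symmetrizing, or equivalently from the interpolation between the two one-sided estimates.

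The main obstacle — and the place I would spend the most care — is the first paragraph: verifying that the re-parametrization map $\Psi$ and, crucially, its inverse are Pfaffian of bounded complexity, so that $\Psi(C_{i,j})$ is again a bounded-complexity semi-Pfaffian curve to which Theorem \ref{th:arcs} and the simple-arc reduction apply. This is exactly the point where algebraic curves would fail (a general algebraic arc is not the graph of an algebraic function), and where the pruning steps guaranteeing that $\gamma_2$ is a \emph{strictly monotone Pfaffian graph} pay off. Everything after that is bookkeeping: tracking the bounded number of discarded pieces into the additive $m+n$ (here $c(m')^2+c(n')^2$) terms and summing the $O_{\alpha,\beta,r}(1)$ many sub-problems $I(\pts_i,\curves_j)$.
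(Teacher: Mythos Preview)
Your reduction is exactly the paper's: parametrize $\gamma_2$ as $y=g_2(x)$, push the incidence problem from $\gamma_2^2\subset\RR^4$ to the $(q_x,q_x')$-plane via the bijection $\Psi$, observe that the two-degrees-of-freedom property survives, and invoke Theorem \ref{th:PachSharir} with $k=2$. So the architecture is right.

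The gap is in the last step, and it is purely arithmetic. For $k=2$ the Pach--Sharir exponent on the curve side is $(2k-2)/(2k-1)=2/3$, not $1/3$; the main term is $m^{2/3}n^{2/3}$. Plugging in $|\pts_i^*|=O(c(n')^2)$ and $|\curves_j^*|=O(c(m')^2)$ gives directly
\[
\bigl(c(n')^2\bigr)^{2/3}\bigl(c(m')^2\bigr)^{2/3}=c^{4/3}(m')^{4/3}(n')^{4/3},
\]
which is the stated bound---no symmetrization, no second application, no geometric mean. Your detour through a ``symmetric argument'' is both unnecessary and ill-posed: the curves $\widetilde C_{i,j}$ live in $\gamma_1^2$, not in $\gamma_2^2$, so they bound a \emph{different} incidence count $I(\widetilde\pts_j,\widetilde\curves_i)$ rather than the same $I(\pts_i,\curves_j)$; you cannot multiply the two and take a square root to bound a single one of them. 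Once you correct the exponent, the entire last paragraph of your proposal can be deleted.
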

\begin{proof}
Recall that $f_2(x,y)$ is the Pfaffian function whose zero set is the arc $\gamma_2$, in a corresponding open set. 
Let $y=g_2(x)$ be the Pfaffian parameterization of $\gamma_2$ in this open set. 

Consider $p,p'\in \pts_1$ and $q,q'\in P_2$. 
Asking whether the point $(q,q')\in \RR^4$ is incident to the curve $C_{p,p'}$ leads to the system 
\begin{align*}
(p_x-q_x )^2+(p_y-q_y )^2 &= (p_x'-q_x')^2+(p_y'-q_y')^2 \\[2mm]
f_2(q_x,q_y)&=f_2(q'_x,q'_y)=0.
\end{align*}

Using the parameterization of $\gamma_2$, the above system becomes
\[ (p_x-q_x)^2+(p_y-g_2(q_x ))^2=(p_x'-q_x' )^2+(p_y'-g_2(q_x'))^2. \]

By taking $q_x$ and $q_x'$ to be coordinates of a plane $\RR^2$, we get the bivariate Pfaffian equation
\[ (p_x-x)^2+(p_y-g_2(x))^2=(p_x'-y)^2+(p_y'-g_2(y))^2. \]
This defines a Pfaffian curve in $\RR^2$, which is the projection of $C_{p,p'}\subset\RR^4$ on the $(q_x,q_x')$-plane. 
We denote this projection as $\Pi(q_x,q_y,q'_x,q'_y) = (q_x,q_x')$.

We recall that $\gamma_2$ is a graph.
This implies that the projection $\Pi$ from $\gamma_2^2$ is one-to-one. 
That is, a point-curve pair in $\RR^4$ form an incidence if and only if their projections form an incidence in $\RR^2$.
Similarly, two curves intersect at a point $u\in \RR^2$ if and only if the two corresponding curves in $\RR^4$ intersect at $\Pi^{-1}(u)$.  
Thus, the incidence structure is maintained and the planar configuration has two degrees of freedom with multiplicity $O_{\alpha,\beta,r}(1)$. 
 
To complete the proof of the lemma, we apply Theorem \ref{th:PachSharir} on the above configuration in $\RR^2$. 
\end{proof}

\parag{Wrapping up.}
It remains to study incidences with $\pts_0$ and with $\curves_0$.
We fix a curve  $C_{i,j}\in\Gamma_0$ and a point $q\in \pts_2$.
For a point $(q,q')\in \pts_2^2$ to be incident to $C_{i,j}$, the point $q'\in\gamma_2$ must be on a circle of radius $|p_iq|$ centered at $p_j$.
We recall that, if $\gamma_2$ is an arc of a circle, then the center of the circle is not in $\pts_1$.
Combining this with Lemma \ref{le:pfBez} implies that the number of points $q'\in\pts_2$ that satisfy the above is $O_{\alpha,\beta,r}(1)$. 
Recalling that $|\curves_0|=O_{\alpha,\beta,r}(m')$ and summing up the above over all $C_{i,j}\in\Gamma_0$ leads to $I(\pts,\curves_0)=O_{\alpha,\beta,r}(m'n')$. 
 
We obtain an upper bound for $I(\pts_0,\curves)$ symmetrically. 
That is, we fix $(q,q')\in \pts_0$ and $1\le i \le t$. 
To have that $(q,q')\in C_{i,j}$, we require $p_j$ to be on the circle of radius $|p_iq|$ centered at $q'$.
This leads to $O_{\alpha,\beta,r}(1)$ possible values of $j$. 
Since $|\pts_0|= O_{\alpha,\beta,r}(n')$, summing up the above over every $(q,q')\in \pts_0$ leads to $I(\pts_0,\curves)=O_{\alpha,\beta,r}(m'n')$. 

By combining the above with Lemma \ref{le:mainincidences}, we obtain that 
 \begin{align*}
|I(P_c,\Gamma)| &\leq |I(P_0, \Gamma)| + |I(P,\Gamma_0)| + \sum_{i,j\ge 1} |I(P_i,\curves_j)| \\
   &= O_{\alpha,\beta,r}(m'n') +\sum_{i,j\ge 1} O_{\alpha,\beta,r}(c^{4/3} (m')^{4/3}(n')^{4/3} + c\cdot(m')^2 + c\cdot('n)^2) \\
   &=O_{\alpha,\beta,r}( c^{4/3}(m')^{4/3}(n')^{4/3}+ c\cdot(m')^2+ c\cdot(n')^2+m'n').
 \end{align*}

We recall that $E_c(\pts_1,\pts_2)= I(\pts_c,\curves)$.
Combining this with \eqref{eq:EnergyLowerc} leads to
\[ c\cdot \frac{(m'n')^2}{|D|} \leq E_c(\pts_1,\pts_2) = |I(P_c,\Gamma)| = O_{\alpha,\beta,r}\left(c^{4/3}(m')^{4/3}(n')^{4/3} + c\cdot(m')^2 + c\cdot(n')^2+m'n'\right). \]

In the above inequality, we wish to avoid the case where the term $m'n'$ is larger than the expression to the left.
This happens when $c=\Omega_{\alpha,\beta,r}(|D|/m'n')$.
Since we wish to minimize $c$, we set $c=\Theta_{\alpha,\beta,r}(|D|/m'n')$. 
This turns the above inequality to 
\[ m'n' = O_{\alpha,\beta,r}\left(|D|^{4/3} + \frac{|D|m'}{n'} + \frac{|D|n'}{m'}\right). \]

Each of the three terms on the right-hand side may dominate the bound. 
By separately considering each case and recalling that $m'=\Theta_{\alpha,\beta,r}(m)$ and $n'=\Theta_{\alpha,\beta,r}(n)$, we obtain that $|D|=\Omega_{\alpha,\beta,r}(\min\{m^{3/4}n^{3/4},n^2,m^2\})$.
Since bringing back the points that were removed from $\pts_1$ and $\pts_2$ can only increase the number of distances, the proof is complete.
\end{proof}  

\section{Proof of Lemma \ref{le:CijInt}} \label{sec:LemmaCij}

In this section, we prove Lemma \ref{le:CijInt}.
This proof contains some of the more algebraic parts of the proof of Theorem \ref{th:twocurves}.
We first recall the statement of the lemma and the definitions relevant to it.
For $1\le i,j\le m'$, we consider the set \begin{equation*}  
C_{i,j} = \{(q,q')\in \gamma_2^2 \subset\RR^4\ :\ |p_i q| = |p_jq'| \}. 
\end{equation*}
We also have that $\curves = \{ C_{i,j}\ :\ 1\leq i,j \leq m' \}$.
\vspace{2mm}

\getkeytheorem{leCijInt}

We first prove parts (a) and (c).
The proof of part (b) is significantly longer and more involved, so it has its own subsection below.

\begin{proof}[Proof of Lemma \ref{le:CijInt}-\ref{cij:curve}.]
A $C_{i,j}\in \curves$ is the set of points $(q,q')\in \RR^4$ that satisfy  
\begin{align*}
(p_{i,x}-q_x )^2+(p_{i,y}-q_y )^2 &= (p_{j,x}-q_x')^2+(p_{j,y}-q_y')^2 \\[2mm]
f_2(q_x,q_y)&=f_2(q'_x,q'_y)=0.
\end{align*}

As in the proof of Lemma \ref{le:mainincidences}, let $\gamma_{i,j}$ be the curve in $\RR^2$ that is defined by
\[ (p_{i,x}-x)^2+(p_{i,y}-g_2(x))^2=(p_{j,x}-y)^2+(p_{j,y}-g_2(y))^2. \]
As discussed in the proof of Lemma \ref{le:mainincidences}, the projection $\Pi: \gamma_2^2 \to \RR^2$ is a bijection between $C_{i,j}$ and $\gamma_{i,j}$.

The only bivariate Pfaffian function with a two-dimensional zero set in $\RR^2$ is 0, so the dimension of $\gamma_{i,j}$ is at most one. 
In other words, $\gamma_{i,j}$ does not have a subset homeomorphic to $[0,1]^2$.
By the bijection $\Pi$, the set $C_{i,j}$ also does not contain such a subset and is thus of dimension at most one. 
\end{proof}

\begin{proof}[Proof of Lemma \ref{le:CijInt}-\ref{cij:finite-small}]
We write $p_i=(p_{i,x},p_{i,y})$.
Then, a point $(x_1,x_2,x_3,x_4)\in C_{i,j}\cap C_{k,\ell}$ satisfies 
 \begin{align}
  (x_1-p_{i,x})^2 + (x_2-p_{i,y})^2 &= (x_3-p_{j,x})^2 + (x_4-p_{j,y})^2, \nonumber \\[2mm]
   (x_1-p_{k,x})^2 + (x_2-p_{k,y})^2 &= (x_3-p_{\ell,x})^2 + (x_4-p_{\ell,y})^2, \nonumber \\[2mm]
  f_2(x_1,x_2) &= f_2(x_3,x_4)=0. \label{eq:CijTwice}
 \end{align}
Each of the above equations is Pfaffian of format at most $(\alpha,\beta,r)$. 
Theorem \ref{th:ConnComp} implies that $C_{i,j}\cap C_{k,\ell}$ has $O_{\alpha,\beta,r}(1)$ connected components.
If $C_{i,j}\cap C_{k,\ell}$ is finite, then it consists of $O_{\alpha,\beta,r}(1)$ points.
\end{proof}

\subsection{Symmetries of Pfaffian sets}

We now study preliminaries that are required for the proof of Part (b) of Lemma \ref{le:CijInt}.
This is a variant of an analysis of Pach and de Zeeuw \cite{PachDZ17}, generalized to Pfaffian curves.

The \emph{isometries} of $\RR^2$ are the rotations, translations, and glide reflections (a reflection
combined with a translation).
A transformation $T$ is a \emph{symmetry} of a curve $\gamma$ if $T$ is an isometry of $\RR^2$ that satisfies $T(\gamma) = \gamma$.
We now list facts about isometries in $\RR^2$. 
For more information, see Conrad \cite{Conrad}.
Let $I_2$ be the $2\times 2$ identity matrix.
Let $O_\RR(2)$ be the group of orthogonal $2\times 2$ matrices. 

\begin{enumerate}[label=(\textbf{Iso-\arabic*}),ref=(\textbf{Iso-\arabic*})]
    \item \label{isofact:def} The isometries of $\RR^2$ are the transformations 
    \[ h_{A, \vecw}(\vecv) = A\vecv + \vecw,\]
    where $A \in O_\RR(2)$ and $\vecw \in \RR^2$.
    \item \label{isofact:classification} A complete classification of isometries of $\RR^2$:
    \begin{enumerate}
        \item The \emph{identity} is obtained when $A = I_2$ and $\vecw = 0$. 
        \item A nonzero \emph{translation} is obtained when $A = I_2$ and $\vecw \neq 0$. 
        \item A nonzero \emph{rotation} is obtained when $\det(A)=1$ and $A\neq I_2$. These are the matrices $A = \begin{pmatrix}
            \cos(\theta) & -\sin(\theta) \\ \sin(\theta) & \cos(\theta)
        \end{pmatrix}$ with $\theta \neq 0$. The center of the rotation is $p = (I_2 - A)^{-1}\vecw$ and the angle is $\theta$. We may also write $h(\vecv)=A(\vecv - p) + p$.
        \item A \emph{reflection} is obtained when $\det(A)=-1$ and $A\vecw = -\vecw$. In particular, it is the reflection across line $\frac{\vecw}{2} + \mathrm{ker}(A - I_2)$. We may also write $h(\vecv) = A\left(\vecv - \frac{\vecw}{2}\right) + \frac{\vecw}{2}$.
        \item A \emph{glide reflection} is obtained when $\mathrm{det}(A) = -1$ and $A\vecw \neq -\vecw$. 
    \end{enumerate}
    \item \label{isofact:group} The set of symmetries of a curve forms a group under composition.
\end{enumerate}

We require an upper bound on the number of symmetries a curve may have.

\begin{lemma}\label{lem:syms}
Let $Q$ be a Pfaffian chain with chain-degree $\alpha$ and order $r$. 
Consider $f \in \Pf_2(\beta, Q)$ such that $\gamma= Z(f)\subset \RR^2$ is an irreducible Pfaffian curve that is neither a line nor a circle. Then $\gamma$ has $O_{\alpha,\beta,r}(1)$ symmetries.
\end{lemma}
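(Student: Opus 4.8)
The plan is to bound the three relevant families of symmetries separately: translations, rotations, and orientation-reversing isometries (reflections and glide reflections). Throughout, I would fix a large box $(-\mu,\mu)^2$, though for a symmetry argument I actually want to work with the whole irreducible curve $\gamma$, using that $\gamma$ has $O_{\alpha,\beta,r}(1)$ connected components by Theorem \ref{th:ConnComp} and that, via Corollary \ref{co:irreducible} applied to $T(\gamma)\cap\gamma$, two distinct Pfaffian curves of bounded format meet in $O_{\alpha,\beta,r}(1)$ points unless they share a one-dimensional component (Lemma \ref{le:pfBez}). The key reduction is: if $T$ is a symmetry of $\gamma$, then $T(\gamma)=\gamma$, so $T$ maps $\gamma$ onto itself; the ``moved'' version agrees with $\gamma$ on a one-dimensional set, which by irreducibility forces a strong rigidity. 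I would extract from $T$ an invariant algebraic-ish locus (a line of fixed points, a center, or an axis) and argue that $\gamma$ determines it up to $O_{\alpha,\beta,r}(1)$ possibilities.

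First, translations. If $h_{I_2,\vecw}$ and $h_{I_2,\vecw'}$ are both symmetries with $\vecw\neq\vecw'$, then by \ref{isofact:group} their composition difference $h_{I_2,\vecw-\vecw'}$ is a symmetry, so the group of translation-symmetries is either trivial, infinite cyclic, or dense/two-dimensional. An infinite group of translations preserving $\gamma$ would force $\gamma$ to contain infinitely many disjoint translates of a fixed arc, contradicting that $\gamma$ has finitely many connected components of bounded complexity (Theorem \ref{th:ConnComp}); a one-parameter translation subgroup would make $\gamma$ a union of parallel lines, excluded by hypothesis. Hence there are $O(1)$ translation-symmetries. Second, rotations: the nonzero rotation-symmetries all share a common center $p$ (two rotations about different centers generate a translation, reducing to the previous case unless the rotation group about a single point is all that survives), and they form a finite or infinite subgroup of $SO(2)$; an infinite rotation group about $p$ forces $\gamma$ to be a union of concentric circles about $p$, excluded, so again $O(1)$ rotation-symmetries. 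For the orientation-reversing case, the composition of two such isometries is orientation-preserving, so it lands in the $O(1)$-size group already controlled; thus the number of orientation-reversing symmetries is at most a constant times the number of orientation-preserving ones. Combining, $|\mathrm{Sym}(\gamma)| = O_{\alpha,\beta,r}(1)$.

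The step I expect to be the main obstacle is making the ``infinite group of isometries forces $\gamma$ to be lines/circles'' dichotomy fully rigorous in the Pfaffian setting, since I cannot invoke algebraic closure arguments. The clean way is the topological finiteness: an infinite group $G$ of isometries acting on $\gamma$, with $\gamma$ having boundedly many components each of bounded complexity, must have an element $T\neq\mathrm{id}$ with $T(\gamma)=\gamma$ and $T(\gamma)$ agreeing with $\gamma$ on a one-dimensional set; if $T$ is a rotation by an irrational multiple of $\pi$ (or the group is not finitely generated) the orbit of a smooth arc of $\gamma$ accumulates, and since $\gamma$ has finitely many components one component is $G$-invariant, which (by a standard argument: a connected $1$-manifold invariant under an infinite rotation group is an arc of a circle; under an infinite translation group contains a full line) pins $\gamma$ down to the excluded cases. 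I would also need to handle the quantitative bound: since $\gamma$ has format $(\alpha,\beta,r)$, the number of connected components is bounded explicitly by Theorem \ref{th:ConnComp}, and each finite symmetry group that is \emph{not} forced to be infinite acts faithfully on this bounded set of components together with the $O_{\alpha,\beta,r}(1)$ intersection points of $\gamma$ with a generic translate, giving an explicit $O_{\alpha,\beta,r}(1)$ bound on $|G|$.
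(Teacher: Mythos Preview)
Your overall architecture matches the paper's: split by isometry type, reduce the orientation-reversing case to the orientation-preserving case, and use Lemma~\ref{le:pfBez} as the finiteness engine. The coset argument for reflections and glide reflections is fine (and in fact tidier than the paper's separate treatment of each). The rotation step is also morally the same, though the paper makes it quantitative in one line: the orbit of any $q\in\gamma\setminus\{p\}$ under the rotation-symmetry group lies on a single circle $C$ centered at $p$, so Lemma~\ref{le:pfBez} gives $|C\cap\gamma|=O_{\alpha,\beta,r}(1)$ directly, without any finite/infinite dichotomy or density-in-$SO(2)$ argument.

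The genuine gap is in your translation step. Your claim that an infinite translation group forces ``infinitely many disjoint translates of a fixed arc'' and hence infinitely many connected components is false: a connected periodic curve such as $y=\sin x$ is invariant under $\ZZ\cdot(2\pi,0)$, has one component, and contains no line. Your fallback ``standard argument'' that a connected $1$-manifold invariant under an infinite translation group contains a full line is likewise false for the same reason. You therefore have not ruled out a single nontrivial translation symmetry, and this matters downstream: your rotation argument (two rotations about different centers generate a translation) needs \emph{zero} nontrivial translations, not $O(1)$.

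The paper's fix is a one-line application of Lemma~\ref{le:pfBez} that you are missing. If $T(\vecv)=\vecv+\vecw$ is a symmetry with $\vecw\neq 0$, pick any $p\in\gamma$; then $p+n\vecw\in\gamma$ for all $n\in\ZZ$, so the line $\{p+t\vecw:t\in\RR\}$ meets $\gamma$ in infinitely many points, and Lemma~\ref{le:pfBez} forces $\gamma$ to be that line, contradicting the hypothesis. This simultaneously shows there are \emph{no} nontrivial translation symmetries (not merely $O(1)$), which is exactly what the commutator argument for rotations needs. Once you insert this, the rest of your plan goes through; you can also drop the vague ``faithful action on components'' paragraph, since the orbit-on-a-circle argument already gives the explicit $O_{\alpha,\beta,r}(1)$ bound.
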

\begin{proof}
Assume for contradiction that $\gamma$ has {\bf a translation symmetry} $T$.
We write $T(\vecv) = \vecv+\vecw$, with $\vecw\in \RR^2\setminus \{0\}$. 
For a point $p\in \gamma$, we have that $f(p) = 0$, which in turn implies that $f(p+\vecw)=0$. 
Inductively, this means that $f(p + n\vecw) = 0$ for all $n \in \ZZ$. 
That is, $\gamma$ has an infinite intersection with the line $\{p + t\vecw : t\in \RR \}$.
By Theorem \ref{le:pfBez}, $\gamma$ is this line, contradicting the assumption.  
  
Next, we assume that $\gamma$ has two {\bf rotation symmetries}.
Following \ref{isofact:def}, we denote these rotations as $h_1=h_{A_1, \vecw_1}(\vecv)$ and $h_2=h_{A_2, \vecw_2}(\vecv)$.
We denote the centers of the rotations as $p_1$ and $p_2$, respectively.
By \ref{isofact:classification}, we have that $h_{i}(\vecv) = A_i(\vecv-p_i) + p_i$. 
We note that $h_{i}(\vecv)^{-1} = A_i^{-1}(\vecv-p_i) + p_i$.
We recall that $A_1,A_2,I_2$ are in the group $O_\RR(2)$, which is Abelian. 
Combining the above leads to
  \begin{align*}
 &(h_2^{-1} \circ h_1^{-1} \circ h_2 \circ h_1)(\vecv) \nonumber \\
 &\qquad \qquad = (h_2^{-1} \circ h_1^{-1} \circ h_2)(A_1(\vecv - p_1) + p_1) \nonumber \\
 &\qquad \qquad = (h_2^{-1} \circ h_1^{-1})(A_2A_1(\vecv - p_1) + A_2 p_1 - (A_2 - I_2)p_2) \nonumber \\
 &\qquad \qquad = (h_2^{-1})(A_2(\vecv - p_1) + (A_1^{-1}A_2 - A_1^{-1} + I_2)p_1 - (A_1^{-1}A_2 - A_1^{-1})p_2) \nonumber \\
 &\qquad \qquad = \vecv + (A_1^{-1} - A_2^{-1}A_1^{-1} + A_2^{-1} - I_2) p_1 - (A_1^{-1} - A_2^{-1}A_1^{-1} + A_2^{-1} - I_2) p_2 \nonumber \\
 &\qquad \qquad = \vecv + (I_2 - A_2^{-1})(I_2 - A_1^{-1})(p_2 -p_1). \nonumber
 \end{align*}

By \ref{isofact:group}, the above transformation is a symmetry of $\gamma$.
When $p_1\neq p_2$, this symmetry is a translation. 
By the first paragraph of the current proof, $\gamma$ cannot have a symmetry that is a translation.
Thus, all rotational symmetries of $\gamma$ have the same center $p$.

Fix a point $q\in \gamma\setminus\{p\}$.  The images of $q$ under the rotational symmetries of $\gamma$ are distinct points on a circle $C$ with center $p$.
By Lemma \ref{le:pfBez}, either $|C \cap \gamma|=O_{\alpha,\beta,r}(1)$ or $C=\gamma$.
By the assumption that $\gamma$ is not a circle, we have that $|C \cap \gamma|=O_{\alpha,\beta,r}(1)$.
This in turn implies that $\gamma$ has $O_{\alpha,\beta,r}(1)$ rotational symmetries. 

Assume for contradiction that $\gamma$ has two {\bf reflection symmetries} with parallel axes of symmetry. 
Combining two such reflections leads to a translation. 
By \ref{isofact:group}, this translation is also a symmetry of $\gamma$, which contradicts the above.
Thus, each two reflection symmetries of $\gamma$ have non-parallel axes.

Consider two reflection symmetries of $\gamma$.
By the preceding paragraph, the two respective axes of reflection intersect in a point $p$. 
Following \ref{isofact:classification}, we may write
\[ h_1(\vecv) = A_1\left(\vecv - \frac{\vecw_1}{2}\right) + \frac{\vecw_1}{2} = A_1\vecv + u_1.  \]
Similarly, we write $h_2(\vecv) = A_2\vecv + u_2$.
Since $p$ is fixed by $h_1$, we have that $A_1 p + u_1 = p$.
Since $p$ is also fixed by $h_2$, applying $h_2$ to both sides above leads to
\begin{equation*} 
A_2(A_1 p + u_1) + u_2 = p \quad \Rightarrow \quad (I_2-A_2A_1)p = A_2 u_1 + u_2.  
\end{equation*}

Combining the above implies that
\[ (h_2 \circ h_1)(\vecv) = A_2A_1\vecv + A_2u_1 + u_2 = A_2A_1\vecv + (I_2 - A_2A_1)p. \]
By \ref{isofact:classification}, $\det(A_1)=\det(A_2)=-1$, so $\det(A_2A_1)=1$.
That is, the symmetry $(h_2 \circ h_1)(\vecv)$ is a rotation with center $p$.
By the above argument for rotation symmetries, the lines of all reflection symmetries of $\gamma$ intersect at $p$.
We fix one such reflection symmetry $h$ and note that combining it with each other reflection symmetry of $\gamma$ leads to a distinct rotation around $p$.
The above analysis of rotation symmetries implies that $\gamma$ has $O_{\alpha,\beta,r}(1)$ reflection symmetries. 

By \ref{isofact:classification}, it remains to consider the case of {\bf a glide reflection symmetry} $h$ of $\gamma$.
We write $h(\vecv) = A\vecv + \vecw$, where  $A\vecw \neq \vecw$ and $A^2=I_2$.
By \ref{isofact:group}, $(h \circ h)(\vecv) = \vecv + (I_2 + A)\vecw$ is a symmetry of $\gamma$.
However, this is a translation, which is impossible. 
We thus conclude that $\gamma$ has no glide reflection symmetries.
\end{proof}

In Section \ref{sec:Lemmab}, we sometimes refer to \emph{symmetries of a Pfaffian arc} $\gamma$. 
This refers to symmetries of the Pfaffian curve $\overline{\gamma}$, as described above.

\subsection{Proof of Lemma \ref{le:CijInt}-\ref{cij:no-three-infinite}} \label{sec:Lemmab}

We now introduce four lemmas about the curves $C_{i,j}$.
Afterwards, we show how combining these four lemmas leads to Lemma \ref{le:CijInt}-\ref{cij:no-three-infinite}.

\begin{lemma}\label{le:samedist}
 If $|p_i p_k| = |p_j p_\ell|$ and $C_{i,j}\cap C_{k,\ell}$ is infinite,  then $\gamma_2$ has a symmetry that maps $p_i$ to $p_j$ and $p_k$ to $p_\ell$.
\end{lemma}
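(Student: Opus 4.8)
The plan is to work with the parametrized planar version of the curves $C_{i,j}$. Recall from the proof of Lemma \ref{le:mainincidences} that, writing $y = g_2(x)$ for the Pfaffian parametrization of the monotone arc $\gamma_2$, the bijection $\Pi$ identifies $C_{i,j}\subset\RR^4$ with the planar Pfaffian curve $\gamma_{i,j}\subset\RR^2$ defined by $(p_{i,x}-x)^2+(p_{i,y}-g_2(x))^2 = (p_{j,x}-y)^2+(p_{j,y}-g_2(y))^2$; equivalently, parametrizing $\gamma_2$ by $\phi(t)=(t,g_2(t))$, a point $(s,t)$ lies on $\gamma_{i,j}$ iff $|p_i\,\phi(s)| = |p_j\,\phi(t)|$. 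Since $\Pi$ is a bijection, $C_{i,j}\cap C_{k,\ell}$ is infinite iff $\gamma_{i,j}\cap\gamma_{k,\ell}$ is infinite.

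So suppose $|p_ip_k|=|p_jp_\ell|$ and $\gamma_{i,j}\cap\gamma_{k,\ell}$ is infinite. The key consequence is that there are infinitely many pairs $(s,t)$ with both $|p_i\,\phi(s)| = |p_j\,\phi(t)|$ and $|p_k\,\phi(s)| = |p_\ell\,\phi(t)|$. For such a pair, the triangle $p_i,p_k,\phi(s)$ and the triangle $p_j,p_\ell,\phi(t)$ have $|p_i\,\phi(s)|=|p_j\,\phi(t)|$, $|p_k\,\phi(s)|=|p_\ell\,\phi(t)|$, and $|p_ip_k|=|p_jp_\ell|$, hence are congruent. Therefore there is an isometry $T_{s,t}$ of $\RR^2$ with $T_{s,t}(p_i)=p_j$, $T_{s,t}(p_k)=p_\ell$, $T_{s,t}(\phi(s))=\phi(t)$. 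An isometry is determined by the images of three affinely independent points; if $p_i,p_k$ are distinct (which we may assume, else the statement degenerates — handle $p_i=p_k$, forcing $p_j=p_\ell$, as a trivial separate case by a direct argument, or note the curves coincide) then the isometry $T$ sending $p_i\mapsto p_j$, $p_k\mapsto p_\ell$ is determined up to the reflection across the line $p_ip_k$. Thus among the infinitely many pairs $(s,t)$, at least one of the two candidate isometries — call it $T$ — satisfies $T(\phi(s))=\phi(t)$ for infinitely many $(s,t)$. Consequently $T(\gamma_2)$ and $\gamma_2$ share infinitely many points, so by Lemma \ref{le:pfBez} (applied to the Pfaffian functions cutting out $\gamma_2$ and its isometric image $T(\gamma_2)$, which is again Pfaffian of controlled format since $T$ is affine) $T(\gamma_2)$ and $\gamma_2$ have a common one-dimensional component; since $\gamma_2$ is irreducible, $T(\overline{\gamma_2})=\overline{\gamma_2}$, i.e. $T$ is a symmetry of $\gamma_2$ mapping $p_i$ to $p_j$ and $p_k$ to $p_\ell$, as required.

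The main obstacle I expect is the bookkeeping in the step ``at least one of the two candidate isometries works for infinitely many pairs.'' One must be careful that ``infinitely many congruences'' genuinely forces a single fixed isometry rather than a family: the pigeonhole is fine since there are only two isometries fixing $\{p_i\mapsto p_j, p_k\mapsto p_\ell\}$, so one of them occurs infinitely often — but one should also rule out the degenerate configuration $p_i=p_k$ cleanly, and verify that $T(\gamma_2)$ is indeed Pfaffian with format depending only on $\alpha,\beta,r$ so that Lemma \ref{le:pfBez} applies. A secondary subtlety is that $\gamma_2$ here is an arc, not a full curve; as noted in the paragraph preceding this subsection, ``symmetry of an arc'' is interpreted as a symmetry of its Zariski closure $\overline{\gamma_2}$, and the infinite-intersection argument via Lemma \ref{le:pfBez} is precisely what licenses passing from the arc to its closure.
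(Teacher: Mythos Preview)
Your proposal is correct and follows essentially the same approach as the paper: both arguments observe that, since $|p_ip_k|=|p_jp_\ell|$, there are exactly two isometries sending $(p_i,p_k)\mapsto(p_j,p_\ell)$, use pigeonhole to find one that carries infinitely many points of $\gamma_2$ back into $\gamma_2$, and then invoke Lemma~\ref{le:pfBez} and irreducibility to conclude it is a symmetry. Your congruent-triangles formulation via the parametrization $\phi$ is in fact a bit more transparent than the paper's route through the auxiliary distance-pair set $D$ and the sets $E,E_1,E_1^*,E_2^*$, but the underlying geometric content is identical.
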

\begin{proof}
 A point $(q_{1,x},q_{1,y},q_{2,x},q_{2,y}) = (q_1,q_2)\in C_{i,j}\cap C_{k,\ell}$ 
 corresponds to $q_1, q_2\in \gamma_2$ 
 that satisfy
\[ |p_i q_1| = |p_j q_2| \quad \text{ and }\quad |p_k q_1| = |p_\ell q_2|. \] 
This implies that 
 \begin{align*}
  \begin{split}
   \{ (d_1,d_2)\ :\ \text{exists } (q_1, q_2)\in C_{i,j}\cap C_{k,\ell} \text{ such that } d_1 = |p_i q_1|\text{ and } d_2 = |p_k q_1|\}\\[2mm]
   = \{ (d_1,d_2)\ :\ \text{exists } (q_1, q_2)\in C_{i,j}\cap C_{k,\ell} \text{ such that } d_1 = |p_j q_2|\text{ and } d_2 = |p_l q_2|\}.
  \end{split}
 \end{align*}
We denote this set of distance pairs as $D$.

For a fixed distance pair $(d_1,d_2)\in D$, we consider the number of pairs $(q_1,q_2)\in C_{i,j}\cap C_{k,\ell}$ that satisfy $d_1 = |p_i q_1|$ and $d_2 = |p_k q_1|$.
We note that $q_1$ is on a circle of radius $d_1$  and center $p_i$.
By an assumption from Section \ref{sec:proofoftheorems}, the curve $\gamma_2$ is not a circle centered at $p_i$.
Thus, Lemma \ref{le:pfBez} implies that there are $O_{\alpha,\beta,r}(1)$ options for $q_1$.
A symmetric argument holds for $q_2$, so each pair $(d_1,d_2)\in D$ is obtained from $O_{\alpha,\beta,r}(1)$ points from $C_{i,j}\cap C_{k,\ell}$.
Since $C_{i,j} \cap C_{k,\ell}$ is infinite,
we conclude that $D$ is also infinite.
  
We define 
\[ E = \{ q\in \RR^2\ :\ \left (|p_i q|, |p_k q|\right)\in D\}. \]
We also set $E_1 = E\cap \gamma_2$. 
Since $D$ is infinite, so is $E_1$.
Since $|p_ip_k|=|p_jp_\ell|$, there exists a rigid motion $T$ that maps $p_i$ to $p_j$ and $p_k$ to $p_\ell$.
Since $D$ is infinite, the motion $T$ takes infinitely many points of $E$ to $\gamma_2$.
Let $E_1^*\subset E$ be the set of these points of $E$. Let $R_{i,j}$ be the reflection about the line through $p_i$ and $p_k$ and set $E_2^*=R_{i,j}(E_1^*)$.
We note that at least one of $E_1\cap E_1^*$ and $E_1\cap E_2^*$ is infinite.

We first consider the case where $E_1\cap E_1^*$ is infinite. 
We write $F_1 = E_1\cap E_1^*$ and note that $T(F_1)\subset \gamma_2$.
Since $F_1 \subset T(\gamma_2)$, we obtain that $\gamma_2\cap T(\gamma_2)$ is infinite. 
Rigid motions preserve Pfaffian curves and their format, so $T(\gamma_2)$ is Pfaffian. 
Lemma \ref{le:pfBez} implies that $\gamma_2$ and $T(\gamma_2)$ have a common component. 
Since $\gamma_2$ is irreducible, we get that $\gamma_2=T(\gamma_2)$.
That is, $T$ is a symmetry of $\gamma_2$ that maps $p_i$ to $p_j$ and $p_k$ to $p_\ell$.

It remains to consider the case where $E_1\cap E_2^*$ is infinite.
In this case, we repeat the analysis of the preceding paragraph, but with $T\circ R_{i,j}$ instead of $T$.
\end{proof}

We now study the case when  $|p_ip_k| \neq |p_j p_\ell|$.
More precisely, we consider triples of curves that satisfy this property.

\begin{lemma}\label{lem:diffdist}
Consider $p_i,p_j,p_k,p_\ell,p_s,p_t\in \pts_1$ such that 
 $|p_i p_k| \neq |p_j p_\ell|$, 
 $|p_i p_s| \neq |p_j p_t|$, and 
 $|p_k p_s| \neq |p_\ell p_t|$.
If $\gamma_2$ is non-algebraic or $\gamma_2$ is algebraic of degree at least 3, then 
\[|C_{i,j}\cap C_{k,\ell} \cap C_{s,t}| =O_{\alpha,\beta,r}(1). \]
\end{lemma}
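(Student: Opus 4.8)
The plan is to prove that $C_{i,j}\cap C_{k,\ell}\cap C_{s,t}$ is always finite; granting this, Theorem \ref{th:ConnComp} shows it has $O_{\alpha,\beta,r}(1)$ connected components, hence consists of $O_{\alpha,\beta,r}(1)$ points, exactly as in the proof of Lemma \ref{le:CijInt}-\ref{cij:finite-small}. So suppose for contradiction that the triple intersection is infinite. Being an infinite semi-Pfaffian set of bounded complexity, it contains a one-dimensional arc $\Lambda$. A point $(q_1,q_2)\in\Lambda$ corresponds to $q_1,q_2\in\gamma_2$ with $|p_iq_1|=|p_jq_2|$, $|p_kq_1|=|p_\ell q_2|$ and $|p_sq_1|=|p_tq_2|$; squaring these and subtracting them in pairs produces two affine relations between $q_1$ and $q_2$ (the purely quadratic terms cancel). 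Throughout, the two facts I would use are Lemma \ref{le:pfBez} (the Bézout bound for Pfaffian curves) together with the irreducibility of $\gamma_2$.

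First I would handle the case in which $p_i,p_k,p_s$ are \emph{not} collinear (or, symmetrically, $p_j,p_\ell,p_t$ are not collinear). Then the two affine relations can be solved for $q_1$ as an affine function $q_1=Mq_2+v$ of $q_2$, precisely because $p_i-p_k$ and $p_i-p_s$ are independent. If $M$ is singular then $q_1$ is confined to a line, so $q_1$ takes finitely many values on $\Lambda$ by Lemma \ref{le:pfBez} (recall $\gamma_2$ is not a line), and then $|p_iq_1|=|p_jq_2|$ confines $q_2$ to finitely many circles, each meeting $\gamma_2$ finitely often — contradicting that $\Lambda$ is infinite. Hence $M$ is invertible, $M\gamma_2+v$ is a Pfaffian curve meeting $\gamma_2$ in an infinite set, and by Lemma \ref{le:pfBez} and the irreducibility of $\gamma_2$ we get $\gamma_2=M\gamma_2+v$; that is, $\psi(q)=Mq+v$ is a self-symmetry of $\gamma_2$. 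Substituting $q_1=\psi(q_2)$ into $|p_iq_1|=|p_jq_2|$ and expanding yields $\langle (M^TM-I)q,q\rangle=(\text{an affine function of }q)$ for infinitely many $q\in\gamma_2$ (namely for all $q=q_2$ with $(Mq_2+v,q_2)\in\Lambda$, an infinite set since the second projection of $\Lambda$ is injective). If $M\notin O_\RR(2)$ the left-hand side is a nonzero quadratic form, so $\gamma_2$ has infinitely many points on a conic and therefore, by Lemma \ref{le:pfBez} and irreducibility, lies on a conic — impossible, since $\gamma_2$ is non-algebraic or has degree at least $3$. Thus $M\in O_\RR(2)$, so $\psi$ is an isometry, and the same identity (together with its analogue coming from $|p_kq_1|=|p_\ell q_2|$) forces $\psi(p_j)=p_i$ and $\psi(p_\ell)=p_k$; but then $|p_ip_k|=|\psi(p_j)\psi(p_\ell)|=|p_jp_\ell|$, contradicting $|p_ip_k|\neq|p_jp_\ell|$.

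The remaining — and harder — case is that both $\{p_i,p_k,p_s\}$ and $\{p_j,p_\ell,p_t\}$ are collinear, on lines $L_1$ and $L_2$. (If only one triple is collinear, one of the two affine relations degenerates into a constraint on $q_2$ alone, pinning $q_2$ to a line, and $\Lambda$ is finite as above.) Parametrizing the six points by their positions along $L_1$ and $L_2$, the two affine relations collapse to a single relation $x_1=\mu\widehat{x}_2+\nu$, where $x_1$ is the coordinate of $q_1$ along $L_1$, $\widehat{x}_2$ that of $q_2$ along $L_2$, and $\mu$ is a ratio of point-spacings; a short computation shows that each of the hypotheses $|p_ip_k|\neq|p_jp_\ell|$, $|p_ip_s|\neq|p_jp_t|$, $|p_kp_s|\neq|p_\ell p_t|$ is equivalent to $|\mu|\neq1$, and that if the three expressions for $\mu$ do not agree then $\widehat{x}_2$ is forced constant and $\Lambda$ is finite. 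One of the original distance equations then reads $y_1^2=y_2^2+(1-\mu^2)\widehat{x}_2^2+(\text{an affine function of }\widehat{x}_2)$, which is a non-degenerate quadratic in $q_2$ precisely because $|\mu|\neq1$. Writing $\gamma_2$ locally as a Pfaffian graph and using that the inverse of a strictly monotone Pfaffian function is again Pfaffian, one converts this into a functional equation of the form $F(\phi(v))=F(v)+(\text{an affine function of the coordinates of }\gamma_2)$, where $\phi$ is a Pfaffian self-map of a parameter arc of $\gamma_2$; since $|\mu|\neq1$, $\phi$ has an attracting (or repelling) fixed point, and iterating toward it, using continuity, collapses the equation to $F$ being an affine function of the coordinates on $\gamma_2$. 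As $F$ here encodes the squared distance from a fixed point, this says $\gamma_2$ lies on a circle — once again contradicting the hypothesis.

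I expect the fully collinear case to be the main obstacle, on two counts: the bookkeeping that reduces the pair of affine relations and all three distance inequalities to the single scalar condition $|\mu|\neq1$, and the functional-equation/iteration argument that upgrades ``$\gamma_2$ is mapped into itself by a quasi-affine self-map'' to ``$\gamma_2$ lies on a conic''. This is exactly where Pfaffian-specific input — continuity and the identity theorem for Pfaffian functions, and closure of the Pfaffian class under inverting monotone maps — replaces the algebraic arguments used by Pach and de Zeeuw; the subcase $L_1\neq L_2$, in which $\phi$ is genuinely more complicated than a scalar affine map, will require the most care.
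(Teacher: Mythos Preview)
Your non-collinear case is correct and is essentially the paper's argument in disguise: writing $q_1=Mq_2+v$ and substituting into $|p_iq_1|^2=|p_jq_2|^2$ is exactly how the paper derives its quadratic constraint \eqref{eq:diffdist:quad} on $q_2$, and your condition $M\in O_\RR(2)$ is precisely the vanishing of the quadratic part of \eqref{eq:diffdist:quad}. The paper then reads off the contradiction directly from the coefficients rather than passing through the isometry $\psi$, but the content is the same.

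The collinear case, however, has a genuine gap --- and a much simpler fix than the iteration you propose. You correctly observe that each of the two subtracted equations yields $x_1=\mu_k\widehat{x}_2+\nu_k$ and that the slopes $\mu_k$ must coincide (else $\widehat{x}_2$ is constant). What you have overlooked is that the \emph{intercepts} must coincide too: otherwise the system is inconsistent and the triple intersection is empty. In your coordinates, if $p_i,p_k,p_s$ have $L_1$-positions $\alpha_i,\alpha_k,\alpha_s$ and $p_j,p_\ell,p_t$ have $L_2$-positions $\beta_j,\beta_\ell,\beta_t$, then $\nu_{12}=\tfrac12(\alpha_i+\alpha_k)-\tfrac{\mu}{2}(\beta_j+\beta_\ell)$ and similarly for $\nu_{13}$; using $\beta_\ell-\beta_t=\mu(\alpha_k-\alpha_s)$ one finds
\[
\nu_{12}-\nu_{13}=\tfrac12(1-\mu^2)(\alpha_k-\alpha_s).
\]
Since $p_k\neq p_s$ (else $|p_kp_s|=0=|p_\ell p_t|$ after the slope match, contradicting a hypothesis), this forces $\mu^2=1$, contradicting $|\mu|\neq1$. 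This is exactly the paper's endgame: after the quadratic part of \eqref{eq:diffdist:quad} vanishes (forcing $b=d=0$, $c=aw$), one further subtraction yields $a(a-1)(w+1)(w-1)=0$, which is the same identity.

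The functional-equation/iteration argument is therefore unnecessary, which is fortunate because it has real problems. First, the inverse of a monotone Pfaffian function is generally \emph{not} Pfaffian with respect to the same chain (e.g.\ $e^x$ versus $\ln x$), so Lemma~\ref{le:pfBez} and the complexity bounds do not transfer as stated. Second, the ``attracting fixed point'' of $\phi$ may lie outside the open arc on which $\gamma_2$ is parameterized, so the iteration need not stay in the domain. Third, you have not explained why a contraction limit forces $F$ to be exactly affine rather than merely agreeing with an affine function at the fixed point. Replace this entire subcase with the intercept check above and the proof goes through cleanly.
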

\begin{proof}
A point in $(q_1,q_2)\in C_{i,j}\cap C_{k,l} \cap C_{q,r}$ corresponds 
 to two points $q_1,q_2\in \gamma_2$
 such that the distances between $q_1$ and $p_i,p_k,p_s$ are equal to the distances between $q_2$ and $p_j, p_\ell,p_t$, respectively.
By potentially applying a rotation or a translation of $\RR^2$, we may assume that $p_i$ is the origin and $p_j$ is on the $x$-axis. 
By potentially applying a uniform scaling of $\RR^2$, we may further write 
$p_i = (0,0)$, $p_k = (1,0)$, and $p_s = (a,b)$.
We denote the coordinates of this plane as $(x,y)$. 
According to a different set of coordinates $(u,v)$, we may write $p_j=(0,0)$, $p_\ell = (w,0)$, and $p_t=(c,d)$, where $w\neq 0,1$.

With the above in mind, satisfying $(q_1,q_2)\in C_{i,j}\cap C_{k,l} \cap C_{q,r}$ leads to the system
 \begin{align}
   q_{1,x}^2+q_{1,y}^2 & = q_{2,x}^2+q_{2,y}^2, \label{eq:diffdist:first} \\[2mm]
  (q_{1,x}-1)^2 + q_{1,y}^2  & = (q_{2,x}-w)^2 + q_{2,y}^2, \label{eq:diffdist:second} \\[2mm]
  (q_{1,x}-a)^2 + (q_{1,y}-b)^2 & = (q_{2,x}-c)^2 + (q_{2,y}-d)^2. \label{eq:diffdist:third}
 \end{align}

Subtracting \eqref{eq:diffdist:first} from \eqref{eq:diffdist:second} gives 
 \begin{equation}\label{eq:diffdist:xcoord} 
  q_{1,x} = q_{2,x}w + \frac{1}{2}(1-w^2).
 \end{equation}
Subtracting \eqref{eq:diffdist:first} from \eqref{eq:diffdist:third}
 and plugging \eqref{eq:diffdist:xcoord} into the result leads to
 \begin{equation}\label{eq:diffdist:ycoord}
  bq_{1,y} = (c - aw) q_{2,x} +dq_{2,y} + \frac{1}{2}(a^2+b^2 -c^2 - d^2 +aw^2 -a).
 \end{equation}
Next, we plug \eqref{eq:diffdist:xcoord} and \eqref{eq:diffdist:ycoord} 
 into \eqref{eq:diffdist:first}, to obtain that
 \begin{equation}\label{eq:diffdist:quad}
  (b^2w^2 + (c-aw)^2 -b ^2) q_{2,x}^2 + (d^2-b^2) q_{2,y}^2 + 2d(c-aw)q_{2,x}q_{2,y} + h(q_{2,x},q_{2,y}) = 0.
 \end{equation}
Here, $h(q_{2,x},q_{2,y})$ is a linear function. 

If the left side of \eqref{eq:diffdist:quad} is not identically zero, then $q_2$ lies on a degree 2 curve.
By the assumption on $\gamma_2$, Lemma \ref{le:pfBez} states that the above curve intersects $\gamma_2$ in $O_{\alpha,\beta,r}(1)$ points.
That is, there are $O_{\alpha,\beta,r}(1)$ choices of $q_2$. 
When fixing $q_2$, equations \eqref{eq:diffdist:xcoord} and \eqref{eq:diffdist:ycoord} uniquely determine $q_1$.
Thus, $|C_{i,j}\cap C_{k,\ell} \cap C_{s,t}| =O_{\alpha,\beta,r}(1)$, as required.

It remains to consider the case where the left side of \eqref{eq:diffdist:quad} is identically zero.  
For the coefficients of the quadratic terms in \eqref{eq:diffdist:quad} to be zero, we must have that $b=d=0$ and $c=aw$.
Plugging these into \eqref{eq:diffdist:third} and subtracting from \eqref{eq:diffdist:second} gives
 \[  (2q_{1,x} - a - 1)(a - 1) = (2q_{2,x} - aw - w)(a - 1)w. \]
Applying \eqref{eq:diffdist:xcoord} on the above and rearranging leads to 
 \begin{equation*} 
      a(a - 1)(w + 1)(w - 1) = 0.
 \end{equation*}
 
For this equation to hold, at least one of the following must occur: $a=0$, $a=1$, $w=1$, or $w=-1$.
We recall that $p_i=(0,0)$, $p_k=(1,0)$, $p_s=(a,0)$. 
By the other coordinate system, $p_j=(0,0)$, $p_\ell=(w,0)$, and $p_t=(aw,0)$.
We consider each of the above cases:
\begin{itemize}[itemsep=1pt,topsep=1pt]
\item When $a=0$, we have that $|p_ip_s|=0=|p_jp_t|$, which contradicts the assumption $|p_ip_s|\neq |p_jp_t|$.
\item When $w=\pm 1$, we have that $|p_ip_k|=1=|p_jp_\ell|$, which contradicts the assumption $|p_ip_k|\neq|p_jp_\ell|$.
\item When $a=1$, we have that $|p_kp_s|=0=|p_\ell p_t|$, which contradicts the assumption $|p_kp_s|\neq|p_\ell p_t|$.
\end{itemize}
We conclude that the left side of \eqref{eq:diffdist:quad} cannot be identically zero, which completes the proof.
\end{proof}

The two following lemmas are proved in \cite{PachDZ17}. 
These lemmas do not involve Pfaffian functions, since they consider an algebraic $\gamma_2$ and do not involve $\gamma_1$.

\begin{lemma}\label{le:conic}
Consider $p_i,p_j,p_k,p_\ell,p_s,p_t\in \pts_1$ such that 
 $|p_i p_k| \neq |p_j p_\ell|$, 
 $|p_i p_s| \neq |p_j p_t|$, and 
 $|p_k p_s| \neq |p_\ell p_t|$.
If $\gamma_2$ is an arc of an algebraic curve of degree 2 then
\[ |C_{i,j}\cap C_{k,l} \cap C_{q,r}| \le 4. \]
\end{lemma}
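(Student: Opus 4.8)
The statement of Lemma \ref{le:conic} is exactly the statement of Lemma \ref{lem:diffdist}, except that $\gamma_2$ is now an arc of an algebraic curve of degree $2$ (a conic), which is precisely the case excluded from Lemma \ref{lem:diffdist}. So I would run the same algebraic reduction as in the proof of Lemma \ref{lem:diffdist}: normalize coordinates so that $p_i=(0,0)$, $p_k=(1,0)$, $p_s=(a,b)$, and in the second coordinate system $p_j=(0,0)$, $p_\ell=(w,0)$, $p_t=(c,d)$ with $w\neq 0,1$; subtract the sphere equations pairwise to get the linear relations \eqref{eq:diffdist:xcoord} and \eqref{eq:diffdist:ycoord} expressing $q_{1,x},q_{1,y}$ affinely in terms of $q_{2,x},q_{2,y}$; and substitute back into \eqref{eq:diffdist:first} to obtain the conic equation \eqref{eq:diffdist:quad} in $(q_{2,x},q_{2,y})$. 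As shown there, the three non-degeneracy hypotheses $|p_ip_k|\neq|p_jp_\ell|$, $|p_ip_s|\neq|p_jp_t|$, $|p_kp_s|\neq|p_\ell p_t|$ force \eqref{eq:diffdist:quad} to be a non-trivial degree-$2$ equation, i.e.\ its zero set $\sigma$ is a genuine conic in the $(q_{2,x},q_{2,y})$-plane.

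The difference from Lemma \ref{lem:diffdist} is that we can no longer invoke Lemma \ref{le:pfBez} to bound $|\sigma\cap\gamma_2|$ by $O_{\alpha,\beta,r}(1)$, because $\gamma_2$ itself is a conic and $\sigma$ could coincide with (the conic carrying) $\gamma_2$. Instead I would argue directly with Bézout for conics: two distinct conics in $\RR^2$ meet in at most $4$ points, so if $\sigma$ is not the conic underlying $\gamma_2$, then $|\sigma\cap\gamma_2|\le 4$, and since \eqref{eq:diffdist:xcoord}--\eqref{eq:diffdist:ycoord} determine $q_1$ uniquely from $q_2$, we get $|C_{i,j}\cap C_{k,\ell}\cap C_{s,t}|\le 4$. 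So the crux is to rule out the case where $\sigma$ equals the conic $\gamma_2$ lies on.

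\textbf{The main obstacle} is therefore handling the degenerate possibility $\sigma = \overline{\gamma_2}$ (as conics). Here I would exploit that $\sigma$ is a conic defined purely in terms of the six points $p_i,\dots,p_t$ via the explicit coefficients in \eqref{eq:diffdist:quad}: the quadratic part has matrix built from $b,d,c-aw$, and the condition $\sigma=\gamma_2$ pins down the coefficients of $\gamma_2$'s defining polynomial in terms of the $p$'s. One then re-examines what $(q_1,q_2)\in C_{i,j}\cap C_{k,\ell}$ means: for every $q_1$ on $\gamma_2$, the corresponding $q_2$ given by the affine map \eqref{eq:diffdist:xcoord}--\eqref{eq:diffdist:ycoord} must also lie on $\gamma_2$ and satisfy $|p_iq_1|=|p_jq_2|$, $|p_kq_1|=|p_\ell q_2|$, $|p_sq_1|=|p_tq_2|$; i.e.\ this affine map is a symmetry of $\gamma_2$ carrying the triple $(p_i,p_k,p_s)$ to $(p_j,p_\ell,p_t)$. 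But a rigid motion that fixes (as a set) a conic and takes one labelled triangle to another congruent-to-it triangle is determined; and the distance constraints $|p_ip_k|\neq|p_jp_\ell|$ etc.\ mean the map is not a rigid motion on the relevant pairs, producing a contradiction. Since this degenerate-conic bookkeeping is exactly the part carried out in \cite{PachDZ17}, I would either reproduce that casework or cite it; in any case the bound $4$ comes from Bézout for two distinct conics, and the non-degeneracy hypotheses guarantee the two conics are indeed distinct.
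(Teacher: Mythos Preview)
The paper does not actually prove Lemma \ref{le:conic}; immediately before stating it the authors write that it ``is proved in \cite{PachDZ17}'' and give no argument of their own. So your proposal and the paper's treatment ultimately coincide: both defer the degenerate-conic bookkeeping to \cite{PachDZ17}.

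That said, your sketch of how the Pach--de Zeeuw argument would go has a couple of loose threads worth tightening if you intend to write it out rather than cite it. First, when $b=0$ equation \eqref{eq:diffdist:ycoord} no longer determines $q_{1,y}$ from $q_2$, so your claim that ``\eqref{eq:diffdist:xcoord}--\eqref{eq:diffdist:ycoord} determine $q_1$ uniquely from $q_2$'' fails and this case needs a separate (easy) treatment. Second, in the degenerate case $\sigma=\overline{\gamma_2}$ the affine map $q_2\mapsto q_1$ given by \eqref{eq:diffdist:xcoord}--\eqref{eq:diffdist:ycoord} has linear part $\begin{pmatrix} w & 0\\ (c-aw)/b & d/b\end{pmatrix}$, which is in general \emph{not} orthogonal, so it is not a symmetry (isometry) of $\gamma_2$ in the sense used elsewhere in the paper; moreover this map acts on the $q$'s, not on the $p$'s, so ``carrying the triple $(p_i,p_k,p_s)$ to $(p_j,p_\ell,p_t)$'' is not what is happening. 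What you actually still have in that case is the extra constraint $q_1\in\gamma_2$, i.e.\ the affine image of $\gamma_2$ meets $\gamma_2$; unless the affine map preserves $\gamma_2$ setwise this is again a Bézout-for-two-conics situation giving at most $4$ points, and the remaining sub-case (affine self-map of the conic) is where the genuine casework in \cite{PachDZ17} lives.
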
 
   
\begin{lemma}\label{le:line}
 Suppose that $|p_ip_k| \neq |p_jp_\ell|$.
 If $\gamma_2$ is a line, then $$|C_{i,j}\cap C_{k,l}| \leq 4.$$
\end{lemma}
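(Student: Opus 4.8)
The plan is to set up coordinates so that $\gamma_2$ is the $x$-axis and then reduce the intersection $C_{i,j}\cap C_{k,\ell}$ to the common zero set of two low-degree polynomials in one variable, at which point a Bézout-type count finishes the job. Concretely, after a rigid motion of $\RR^2$ I would assume $\gamma_2 = \{(x,0) : x \in \RR\}$ (or an arc thereof), so a point $(q_1,q_2)\in C_{i,j}\cap C_{k,\ell}$ is determined by two reals $q_{1,x}, q_{2,x}$ with $q_{1,y} = q_{2,y} = 0$. Writing $p_i,p_j,p_k,p_\ell$ in these coordinates, membership in $C_{i,j}$ and in $C_{k,\ell}$ gives the two equations
\begin{align*}
(q_{1,x}-p_{i,x})^2 + p_{i,y}^2 &= (q_{2,x}-p_{j,x})^2 + p_{j,y}^2, \\
(q_{1,x}-p_{k,x})^2 + p_{k,y}^2 &= (q_{2,x}-p_{\ell,x})^2 + p_{\ell,y}^2.
\end{align*}

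Next I would subtract the two equations to eliminate the quadratic terms. Since $q_{1,x}^2$ and $q_{2,x}^2$ appear with the same coefficient in both equations, the difference is \emph{linear} in $(q_{1,x}, q_{2,x})$, say $\alpha q_{1,x} + \beta q_{2,x} + \gamma = 0$ for constants depending on the $p$'s. The hypothesis $|p_i p_k| \neq |p_j p_\ell|$ is exactly what I expect to force this linear relation to be nontrivial in a useful way: I would check that it prevents the degenerate situation where the linear relation is identically $0 = 0$ (which would happen only when the four points are arranged with a coincidence of distances) or where it fails to express one variable in terms of the other. Assuming the generic case, solve the linear equation for $q_{1,x}$ in terms of $q_{2,x}$ (or vice versa), substitute back into the first quadratic equation, and obtain a single quadratic polynomial equation in $q_{2,x}$ alone. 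A quadratic has at most $2$ roots; each root gives one value of $q_{1,x}$ and hence at most one point of $C_{i,j}\cap C_{k,\ell}$. A slightly more careful bookkeeping — accounting for the fact that I may have had to break into two subcases depending on which variable the linear relation lets me solve for, and possibly a reflection ambiguity analogous to the one in Lemma \ref{le:samedist} — is what brings the bound up to $4$ rather than $2$.

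The main obstacle I anticipate is handling the degenerate case where the linear relation obtained after subtraction becomes trivial or where the resulting quadratic in one variable is identically zero: there one must show that this can only occur when $|p_i p_k| = |p_j p_\ell|$, i.e., that the hypothesis of the lemma rules it out. This is a short but slightly fiddly algebraic computation, of the same flavor as the "identically zero" analysis at the end of the proof of Lemma \ref{lem:diffdist}. Once that case is dispatched, everything else is the elementary observation that intersecting a line (the solution locus of the linear relation) with a conic (the first quadratic) in the plane of parameters $(q_{1,x}, q_{2,x})$ yields at most two points, and tracking the at-most-doubling from the auxiliary subcases gives the stated constant $4$. Note also that Lemma \ref{le:line} is stated without reference to proximity or to $\gamma_1$, so no Pfaffian machinery is needed here — it is purely a statement about distances to fixed points from points on a line, and the proof is entirely elementary.
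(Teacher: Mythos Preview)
The paper does not supply its own proof of this lemma; it simply quotes the result from Pach--de Zeeuw \cite{PachDZ17} (see the sentence immediately preceding Lemma~\ref{le:conic}), so there is no in-paper argument to compare against. Your outline is exactly the standard approach and is essentially how the line case is handled in \cite{PachDZ17}: parametrize $\gamma_2$ as the $x$-axis, so that $C_{i,j}$ and $C_{k,\ell}$ become degree-$2$ plane curves in $(q_{1,x},q_{2,x})$, and then bound their intersection. The constant $4$ is simply B\'ezout for two conics; your subtract-and-substitute route in fact gives $2$ once the degeneracies are excluded, and your hand-waving about ``reflection ambiguity'' is unnecessary.

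One genuine caution about the ``main obstacle'' you anticipate: the hypothesis $|p_ip_k|\neq|p_jp_\ell|$ alone does \emph{not} rule out a common component. For example, with $\gamma_2$ the $x$-axis and
\[
p_i=(0,1),\quad p_j=(1,1),\quad p_k=(3,2),\quad p_\ell=(4,-2),
\]
one has $|p_ip_k|^2=10\neq 18=|p_jp_\ell|^2$, yet both $C_{i,j}$ and $C_{k,\ell}$ contain the line $q_{1,x}-q_{2,x}=-1$. What actually saves the lemma in the paper's context is the pruning performed earlier in the proof of Theorem~\ref{th:twocurves}: when $\gamma_2$ is a line, $\pts_1$ has been thinned so that no two of its points lie on a common line parallel or orthogonal to $\gamma_2$. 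Under that implicit extra assumption your degenerate-case check does go through (one reduces to $p_{i,y}=-p_{j,y}$, $p_{k,y}=-p_{\ell,y}$, and then the forced $x$-relations force $|p_ip_k|=|p_jp_\ell|$). So your plan is sound, but when you write it up you should invoke the pruning explicitly rather than relying solely on the stated distance inequality.
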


Finally, we combine the four lemmas above, to prove part (b) of Lemma \ref{le:CijInt}.

\begin{proof}[Proof of Lemma \ref{le:CijInt}-\ref{cij:no-three-infinite}.]
We say that a symmetry $T$ of $\gamma_2$ \emph{respects} $C_{i,j}$ if $T(p_i)=p_j$.
By Lemma \ref{le:samedist}, if $C_{i,j}$ and $C_{k,\ell}$ have infinite intersection and $|p_ip_k|=|p_jp_\ell|$, then there is a symmetry that respects both $C_{i,j}$ and $C_{k,\ell}$.

We first consider the case where $\gamma_1$ is neither an arc of a line nor an arc of a circle.
In this case, Lemma \ref{lem:syms} states that $\gamma_1$ has $O_{\alpha,\beta,r}(1)$ symmetries.
A fixed symmetry $T$ sends each $p_i$ to a distinct $p_j$.
Thus, $T$ respects $O(m')$ curves $C_{i,j}$.
We conclude that $O_{\alpha,\beta,r}(m')$ curves of $\curves$ are respected by a symmetry of $\gamma_2$.
Let $\curves_0$ be the set of curves of $\curves$ that are respected by such a symmetry.

Next, we assume that $\gamma_2$ is an arc of a circle.
In this case, $\gamma_2$ has infinitely many symmetries.
By the pruning at the beginning of the proof of Theorem \ref{th:twocurves}, every circle concentric to $\gamma_2$ contains at most one point of $\pts_1$. 
Thus, for $p_i,p_j \in \pts_1$ with $i\neq j$, no symmetry of $\gamma_2$ respects $C_{i,j}$.
Since the center of the circle of $\gamma_2$ is not in $\pts_1$, no symmetry of $\gamma_2$ respects $C_{i,i}$.
We may thus set $\curves_0=\emptyset$. 
The case where $\gamma_2$ is a line segment is symmetric.

Combining Lemma \ref{le:samedist} with the above, if $C_{i,j}, C_{k,\ell}\in \curves\setminus \curves_0$ have an infinite intersection, then $|p_ip_k|\neq|p_jp_\ell|$.
Then, Lemmas \ref{lem:diffdist}, \ref{le:conic}, and \ref{le:line}, imply that no three curves of $\curves\backslash \curves_0$ have an infinite intersection.
\end{proof}

\bibliographystyle{plain}
\bibliography{Pfaffian}

\end{document}